\newenvironment{proof}{\par \noindent{\bf Proof: }}{\hspace{\stretch{1}} $\Box$ \par \mbox{}}
\newcommand{\noproof}{\hspace{\stretch{1}} $\Box$}
\newtheorem{theorem}{Theorem}[section]
\newtheorem{proposition}[theorem]{Proposition}
\newtheorem{lemma}[theorem]{Lemma}
\newtheorem{corollary}[theorem]{Corollary}
{\theorembodyfont{\rmfamily}
\newtheorem{definition}[theorem]{Definition}
\newtheorem{example}[theorem]{Example}
}
\newenvironment{theorem*}{\par \medskip \noindent{\bf Theorem }}{\par \mbox{}}
\newenvironment{lemma*}{\par \medskip \noindent{\bf Theorem }}{\par \mbox{}}
\newcommand{\Map}{\mathop{\rm Map}}
\newcommand{\R}{\mathbb{R}}
\newcommand{\Z}{\mathbb{Z}}
\newcommand{\N}{\mathbb{N}}
\newcommand{\Cay}{\rm \mathop{Cay}}
\newcommand{\Supp}{\rm \mathop{Supp}}
\newcommand{\id}{\mathop{id}}
\title{An Analytic Novikov Conjecture for Semigroups}
\author{Paul D. Mitchener}
\begin{document}

\maketitle

\section*{Abstract}

In this article we formulate a version of the analytic Novikov conjecture for semigroups rather than groups, and show that the descent argument from coarse geometry generalises effectively to this new situation.

\section{Introduction}

For the purposes of this article, a {\em semigroup} is a set $P$ equipped with an associative binary operation $P\times P\rightarrow P$, such that we have a {\em unit element} $e\in P$ where $pe=ep =p$ for all $p\in P$, and the {\em left cancellation property} holds, that is to say $pq=pr$ implies $q=r$ for all $p,q,r \in P$.  Note that the left cancellation property tells us that the unit element $e$ is unique.

In \cite{Li}, both the reduced and maximal $C^\ast$-algebras associated to a semigroup are defined, issues associated to amenability examined, and $K$-theory groups computed.  The computations of $K$-theory groups lead to a natural question, namely whether a version of the Baum-Connes conjecture (see for instance \cite{BCH}) could be formulated for semigroups.

In this paper, we make a first step towards such a conjecture, formulating an {\em analytic assembly map} $\beta \colon K_n^P (EP)\rightarrow K_n C^\ast_r (P)$, where $C^\ast_r (P)$ is the reduced $C^\ast$-algebra of the semigroup $P$, and $EP$ is the classifying space for free $P$-actions.  We conjecture that this map is injective for torsion-free semigroups.

We also show that the descent argument from the coarse Baum-Connes conjecture, as explained for example in \cite{Roe1}, or in general in \cite{Mitch13}, still works in the semigroup case.  Thus the analytic Novikov conjecture holds for semigroups where the space $EP$ is a finite $P$-$CW$-complex, and has a compatible coarse structure where the coarse Baum-Connes conjecture is satisfied.

The descent argument works in the same way as it does for groups, but to carry it out we need to generalise parts of the general theory of equivariant homology for group actions to the semigroup case.  These generalisations are fortunately straightforward, and the details can be found in sections 4 and 5.

We conclude the article by looking at some simple examples where the descent argument applies.

\section{Semigroup Actions}

Let $P$ be a semigroup.  Let $X$ be a set.  A {\em left $P$-action} on $X$ is a map $P\times X\rightarrow X$, written $(p,x)\mapsto px$, such that $p(qx) = (pq)x$ for all $p,q\in P$ and $x\in X$.  

Similarly, a {\em right $P$-action} on $X$ is a map $X\times P\rightarrow X$, written $(x,p)\mapsto xp$, such that $(xp)q = x(pq)$ for all $p,q\in P$ and $x\in X$.

For a set $X$ equipped with a left $P$-action, and a subset $A\subseteq X$, we write
\[ pA = \{ pa \ | \ a\in A \} \qquad PA = \bigcup_{p\in P} PA . \]

Given sets $X$ and $Y$ with left $P$-actions, a map $f\colon X\rightarrow Y$ is called {\em equivariant} if $f(px) = pf(x)$ for all $x\in X$ and $p\in P$.  We similarly talk about equivariant maps between sets equipped with right $P$-actions.

A {\em $P$-space} is a topological space equipped with a continuous left $P$-action.  Just as for groups acting on spaces, we distinguish certain types of $P$-spaces.  Given $P$-spaces $X$ and $Y$, we write $\Map_P (X,Y)$ to denote the set of all continuous equivariant maps from $X$ to $Y$.  It is a topological space, with the compact open topology.

\begin{definition}
Let $X$ be a $P$-space.  Then we call $X$:

\begin{itemize}

\item {\em Free} if for all $x\in X$, there is an open neighbourhood $U\ni x$ such that $pU\cap U =\emptyset$ for all $p\in P\backslash \{ e \}$, where $e$ is the unit element of $P$.

\item {\em Cocompact} if there is a compact subset $K\subseteq X$ such that $X=PK$.

\end{itemize}

We call an equivariant continuous map $f\colon X\rightarrow Y$ {\em proper} if whenever $Z\subseteq Y$ is cocompact, the inverse image $f^{-1}[Z]\subseteq X$ is also cocompact.

\end{definition}

\begin{example}
We call a subset  $S\subseteq P$ a {\em generating set} if every element of $P$ is a product of elements of $S$.  The {\em Cayley graph} $\Cay (P;S)$ is the graph with set of vertices $P$, where $p,q\in P$ are joined by an edge if and only if $p=sq$ or $q=sp$.

We give Cayley graph is given the path-length metric where each edge has length $1$.  There is a free $P$-action on the space $\Cay (P;S)$ defined by left-multiplication on the vertices, and extending to be linear on the edges.  The $P$-space $\Cay (P;S)$ is cocompact if $S$ is finite.
\end{example}

\begin{example} \label{ij}
The {\em infinite join} (see \cite{Mil6}), $P\ast P\ast P \ast \cdots$ of countably many copies of the semigroup $P$ is a free and weakly contractible $P$-space.
\end{example}

Let $X$ and $Y$ be metric spaces.  Recall (see for instance \cite{Roe6}) that a (not necessarily continuous) map $f\colon X\rightarrow Y$ is called a {\em coarse map} if:

\begin{itemize}

\item For all $R>0$ there exists $S>0$ such that if $d(x,y)<R$, then $d(f(x),f(y))<S$.

\item Let $B\subseteq Y$ be bounded.  Then $f^{-1}[B] \subseteq X$ is also bounded.

\end{itemize}

A {\em coarse $P$-space} is a proper metric space $X$ equipped with a $P$-action such that for each $p\in P$, the map $p\colon X\rightarrow X$ is both coarse and continuous.

Note that for a generating set $S$, the Cayley graph $\Cay (P;S)$ is an example of a coarse $P$-space.

\begin{definition}
Let $P$ be a semigroup.  We call an equivalence relation, $\sim$, on $P$ a {\em left congruence} if whenever $p\sim q$ and $p,q,r\in P$, we have $rp\sim rq$.
\end{definition}

Observe that if we have a left congruence, $\sim$, we have a left $P$-action on the set of equivalence classes $P/\sim$ defined by writing $p([q]) = [pq]$, where $[r]$ is the equivalence class containing an element $r\in P$.  The quotient $P/\sim$ can be considered a $P$-space with the discrete topology.

\begin{definition}
A {\em homogeneous $P$-space} is a $P$-space $X$ such that there is an equivariant homeomorphism $X\rightarrow P/\sim$ for some left congruence $\sim$.
\end{definition}

We now define a class of $P$-spaces of particular importance to us, called {\em $P$-$CW$-complexes}.  Firstly, write
\[ D^{n+1} = \{ (x_0,\ldots ,x_n )\in \R^{n+1} \ |\ x_0^2 +\ldots + x_n^2 \leq 1 \} \]
and
\[ S^n = \{ (x_0,\ldots ,x_n )\in \R^{n+1} \ |\ x_0^2 +\ldots + x_n^2 = 1 \} . \]

Note that $S^n \subseteq D^{n+1}$.  An {\em $n$-dimensional $P$-cell} is a $P$-space of the form $X\times D^n$, where $X$ is a homogeneous $P$-space, and $P$ acts trivially on $D^n$.  

Given a $P$-space $Y$ and $P$-cell $X\times D^n$ equipped with a continuous equivariant map $f\colon S^{n-1}\times X\rightarrow Y$, we can form a $P$-space
\[ (X\times D^n) \cup_{X\times S^{n-1}} Y = \frac{(X\times D^n)\amalg Y}{\sim} \]
where $(x,s)\sim f(x,s)$ if $(x,s)\in X\times S^{n-1}$.

We call the $P$-space $(X\times D^n) \cup_{X\times S^{n-1}} Y$ the space obtained from $Y$ by {\em attaching} the $P$-cell $X\times D^n$ by the map $f$.

\begin{definition}
A {\em finite $P$-$CW$-complex} is a $P$-space $X$ together with a sequence of subspaces
\[ X^0 \subseteq X^1 \subseteq \cdots \subseteq X^n = X \]
such that:

\begin{itemize}

\item The space $X^0$ is a finite disjoint union of homogeneous $P$-spaces.

\item The space $X^k$ is equivariantly homeomorphic to the space obtained from $X^{k-1}$ by attaching finitely many $k$-dimensional $P$-cells.

\end{itemize}

\end{definition}

The above sequence $X^0 \subseteq X^1 \subseteq \cdots \subseteq X^n =X$ is called a {\em $CW$-decomposition} of $X$.

Note that any finite $P$-$CW$-complex is cocompact.  The following is fairly clear.

\begin{proposition}
Let $X$ be a finite $P$-$CW$-complex.  Then $X$ is free if and and only if it has a $CW$-composition in which, for all $k$, every $k$-dimensional $P$-cell takes the form $P\times D^n$.
\noproof
\end{proposition}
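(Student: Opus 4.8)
The plan is to prove both implications, the ``if'' direction being a routine induction over skeleta and the real content lying in the ``only if'' direction.

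\textbf{The ``if'' direction.} Suppose $X$ has a $CW$-decomposition $X^0\subseteq\cdots\subseteq X^n=X$ in which every $P$-cell is $P\times D^k$, with $P$ acting by left multiplication on the first factor. I would prove freeness by induction on the skeleta. In the base case $X^0$ is a finite disjoint union of copies of $P$ with the discrete topology: for a point lying in one copy, say $p\in P$, the singleton $\{p\}$ is an open neighbourhood and $q\{p\}\cap\{p\}=\{qp\}\cap\{p\}=\emptyset$ for every $q\neq e$, since left multiplication by a non-unit element has no fixed point on $P$ (this is where left cancellation enters). For the inductive step, suppose $X^{k-1}$ is free and $X^k$ is obtained by attaching cells $P\times D^k$ along equivariant maps $f\colon P\times S^{k-1}\to X^{k-1}$. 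A point of $X^k$ either lies in the interior of a cell, say $(p,d)$ with $d$ in the open disc, in which case $\{p\}\times V$ for a small ball $V\ni d$ is a neighbourhood with $q(\{p\}\times V)=\{qp\}\times V$ disjoint from $\{p\}\times V$ because $qp\neq p$ for $q\neq e$; or it lies in $X^{k-1}$, in which case a free neighbourhood there can be thickened slightly into the attached cells via the attaching maps, the new collar pieces not interfering for the same fixed-point-free reason. Hence $X^k$, and therefore $X$, is free.

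\textbf{The ``only if'' direction.} Suppose $X$ is free and fix any $CW$-decomposition. Freeness is inherited by every $P$-subspace of $X$, since the defining condition is monotone in the subspace. For each $P$-cell $(P/{\sim})\times D^k$, restricting the characteristic map to interiors exhibits $(P/{\sim})\times\mathrm{int}(D^k)$ (for $k=0$, just $P/{\sim}$ itself) as a $P$-subspace of $X$; restricting further to $(P/{\sim})\times\{\ast\}$ shows that each homogeneous $P$-space $P/{\sim}$ occurring in the decomposition is free. The decisive step is then the lemma: \emph{a free homogeneous $P$-space is equivariantly homeomorphic to $P$ with its left-multiplication action} --- equivalently, the only left congruence $\sim$ for which $P/{\sim}$ is a free $P$-space is equality. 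Granting this, every cell $(P/{\sim})\times D^k$ of the chosen decomposition is equivariantly homeomorphic to the standard cell $P\times D^k$, and replacing the cells by these models produces the required $CW$-decomposition.

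\textbf{Where the difficulty lies.} Everything is pinned to the lemma of the previous paragraph, and that is the step I expect to absorb most of the care. Evaluating freeness of $P/{\sim}$ at the class of the unit gives at once $p\sim e\Rightarrow p=e$; the problem is to bootstrap this, using the left congruence axiom together with the left cancellation property of $P$, into the statement that $p\sim q$ forces $p=q$. This is the one point at which the algebraic hypotheses on $P$ do genuine work rather than mere bookkeeping, so I would isolate it as a separate lemma; granted it, both the reduction of the proposition and the ``if'' direction are straightforward.
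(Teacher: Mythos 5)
The paper offers no proof of this proposition (it is asserted as ``fairly clear''), so the only question is whether your argument stands on its own --- and it does not: the lemma on which you hang the entire ``only if'' direction is false. Take $P=V_2$, the free monoid on two generators $x,y$ (left cancellative, with unit), and let $\sim$ be the left congruence generated by $x\sim y$; its equivalence classes are $\{e\}$ and $\{wx,wy\}$ for each word $w$. The homogeneous $P$-space $P/\!\sim$ is free in the paper's sense: with the discrete topology the singleton $\{[q]\}$ is an admissible neighbourhood, because $pq\sim q$ with $p\neq e$ is impossible (the two members of any nontrivial class have equal word length, while $|pq|>|q|$). Yet $\sim$ is not equality, and $P/\!\sim$ is not equivariantly homeomorphic to $P$, nor to any finite disjoint union of copies of $P$: an equivariant map out of a copy of $P$ is determined by the image $[w]$ of $e$, and hitting the class $\{e\}$ forces $w=e$, whereupon the resulting map $q\mapsto[q]$ identifies $x$ and $y$. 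So the step you yourself isolate as the crux --- bootstrapping ``$p\sim e\Rightarrow p=e$'' up to ``$\sim$ is equality'' using left congruence plus left cancellation --- cannot be carried out: freeness only forbids relations of the form $pq\sim q$, and a left congruence can avoid all of these without being trivial. (Indeed this example is itself a free finite $P$-$CW$-complex, being discrete it admits only $0$-cells, and it admits no decomposition into cells $P\times D^n$; so any correct argument would have to use a stronger notion of freeness than the one defined in the paper, which is a defect of the statement rather than something your route could repair.)

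A smaller but real problem sits in the base case of your ``if'' direction: you justify $qp\neq p$ for $q\neq e$ by left cancellation, but left cancellation cancels on the left (it gives $pq=p\Rightarrow q=e$); excluding $qp=p$ would require right cancellation or some extra hypothesis. For instance, in the left-cancellative monoid of injective self-maps of $\N$ under composition one can have $q\circ p=p$ with $q\neq\mathrm{id}$ (take $p(n)=2n$ and $q$ the identity on even numbers but a nontrivial injection on odd numbers). So even freeness of the model cells $P\times D^n$, and hence of the Cayley-graph action, is not automatic from the paper's axioms as stated; your induction over skeleta is otherwise reasonable, but this point needs either the additional algebraic hypothesis or a different justification.
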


\section{The Coarse Baum-Connes Conjecture}
Let $X$ be a proper metric space.  Recall that a Hilbert space $H$ is called an {\em $X$-module} if the $C^\ast$-algebra of bounded linear operators ${\mathcal L}(H)$ is equipped with a $\ast$-homomorphism $\rho \colon C_0 (X)\rightarrow {\mathcal L}(H)$.  

Let ${\mathcal K}(H)$ be the $C^\ast$-algebra of compact operators on $H$.  Then we call an $X$-module $H$ {\em ample} if $\overline{\rho [C_0 (X)] H} =H$ and $\rho [C_0 (X)]\cap {\mathcal K}(H) = \{ 0 \}$.  

\begin{definition}
Let $H$ be an $X$-module, and let $T\in {\mathcal L}(H)$.  Then:

\begin{itemize}

\item We call $T$ {\em locally compact} if $\rho (f)T ,T\rho (f) \in {\mathcal K}(H)$ for all $f\in C_0 (X)$.

\item We call $T$ {\em pseudolocal} if $\rho (f)T - T\rho (f) \in {\mathcal K}(H)$ for all $f\in C_0 (X)$.

\item We define the {\em support} of $T$, $\Supp (T)\subseteq X\times X$, to be the set of pairs $(x,y)\in X\times X$ suc that for all open sets $U\ni x$ and $V\ni y$, we have $f\in C_0 (U)$ and $g\in C_0 (V)$ such that $\rho (f) T \rho (g) \neq 0$.

\item We call $T$ {\em controlled} if the support $\Supp (T)$ is contained in a neighbourhood of the diagonal, $\Delta_R = \{ (x,y)\in X\times X \ |\ d(x,y) <R \}$, for some $R>0$.

\end{itemize}

\end{definition}

\begin{definition}
Let $H$ be an ample $X$-module.  Then we define $D^\ast (X)$ to be the smallest $C^\ast$-subalgebra of ${\mathcal L}(H)$ containing all pseudolocal and controlled operators.

We define $C^\ast (X)$ to be the smallest $C^\ast$-subalgebra of ${\mathcal L}(H)$ containing all locally compact and controlled operators.
\end{definition}

Now, $C^\ast (X)$ is a $C^\ast$-ideal in $D^\ast (X)$, so we have a short exact sequence
\[ 0 \rightarrow C^\ast (X) \rightarrow D^\ast (X) \rightarrow \frac{D^\ast (X)}{C^\ast (X)} \rightarrow 0 . \]

Further, as shown in \cite{HR1,Roe1}, the $K$-theory group of the quotient, $K_n \left( \frac{D^\ast (X)}{C^\ast (X)} \right)$, is isomorphic to the $K$-homology group $K_{n-1} (X)$, and the $K$-theory group $K_n C^\ast (X)$ does not depend on a particular choice of $X$-module.  Thus, looking at the boundary maps in the long exact sequence of $K$-theory groups (see for example \cite{RLL,W-O}), we obtain a map
\[ \alpha \colon K_\ast (X)\rightarrow K_\ast C^\ast (X) \]
called the {\em coarse assembly map}.

The {\em coarse Baum-Connes conjecture} asserts that this map is an isomorphism whenever the space $X$ has bounded geometry and is uniformly contractible; we refer the reader again to \cite{HR1,Roe1} for details, including precisely what the terms bounded geometry and uniformly contractible.

The coarse Baum-Connes conjecture is known to be true for a vast number of spaces, perhaps most notably bounded geometry and uniformly contractible spaces which can be uniformly embedded in Hilbert space (see \cite{Yu2}), but, as shown in \cite{HLS}, is false in general.

\section{Equivariant Homology} \label{EH}

\begin{definition}
Let $f,g\colon X\rightarrow Y$ be equivariant maps between $P$-spaces.  A {\em $P$-homotopy} between $f$ and $g$ is an equivariant continuous map $H\colon X\times [0,1]\rightarrow Y$ such that $H(-,0) =f$ and $H(-,1)=g$.
\end{definition}

Above, the space $X\times [0,1]$ is given the $P$-action defined by the formula $p(x,t) = (px,t)$ where $p\in P$, $x\in X$ and $t\in [0,1]$.

If a $P$-homotopy exists between maps $f$ and $g$, we call them {\em $P$-homotopic}, and write $f\simeq_P g$.  The notion of being $P$-homotopic is an equivalence relation.

A continuous equivariant map $f\colon X\rightarrow Y$ is called a {\em $P$-homotopy equivalence} if there is a continuous equivariant map $g\colon Y\rightarrow X$ such that $g\circ f\simeq_P \id_X$ and $f\circ g\simeq_P \id_Y$.  We write $X\simeq_P Y$ when a $P$-homotopy equivalence $X\rightarrow Y$ exists.

\begin{definition}
A {\em locally finite $P$-homology theory}, $h_\ast^P$, consists of a sequence of functors, $h_n^P$, from the category of $P$-spaces and proper equivariant maps to the category of abelian groups satisfying the following axioms.

\begin{itemize}

\item Let $f,g\colon X\rightarrow Y$ be equivariant continuous maps that are properly $P$-homotopic.  Then the maps $f_\ast , g_\ast \colon h_n^P (X) \rightarrow h_n^P (Y)$ induced by the functor $h_n^P$ are equal for all $n$.

\item Let $X=A\cup B$ be a $P$-space, where $A,B\subseteq X$ are open, and $PA\subseteq A$, $PB\subseteq B$.  Consider the inclusions $i\colon A\cap B\hookrightarrow A$, $j\colon A\cap B\hookrightarrow B$, $k\colon A\hookrightarrow X$ and $l\colon B\hookrightarrow X$.  Let $\alpha = (i_\ast ,-j_\ast )\colon h_n^P (A\cap B)\rightarrow h_n^P (A)\oplus h_n^P (B)$ and $\beta = k_\ast + l_\ast \colon h_n^P (A)\oplus h_n^P (B)\rightarrow h_n^P (X)$.  Then we have natural maps $\partial \colon h_n^P (X)\rightarrow h_{n-1}^P (A\cap B)$ fitting into a long exact sequence
\[ \rightarrow h_n^P (A\cap B) \stackrel{\alpha}{\rightarrow} h_n^P (A)\oplus h_n^P (B) \stackrel{\beta}{\rightarrow} h_n^P (X) \stackrel{\partial}{\rightarrow} h_{n-1}^P (A\cap B) \rightarrow . \]

\item $h_n (\emptyset ) = \{ 0 \}$ for all $n$.

\end{itemize}

\end{definition}

We calll the first of these axioms {\em homotopy invariance}.  The long exact sequence in the second axiom is called the {\em Mayer-Vietoris sequence} associated to the decomposition $X=A\cup B$.

We can also talk about locally finite $P$-homology theories on subcategories of the category of $P$-spaces and proper equivariant maps, for instance on the category of free $P$-spaces.

We now show that knowing a locally finite $P$-homology theory for homogeneous $P$-spaces uniquely determines it for finite $P$-$CW$-complexes.

\begin{lemma} \label{HM1}
Let $Y$ be a $P$-space, let $f\colon X\times S^{n-1}\rightarrow Y$ be a proper equivariant continuous map, and let $Z= (X\times D^n)\cup_{X\times S^{n-1}} Y$.  Then we have a natural long exact sequence
\[ \rightarrow h_n^P (X\times S^{n-1}) \rightarrow h_n^P (X)\oplus h_n^P (Y) \rightarrow h_n^P (Z)\stackrel{\partial}{\rightarrow} h_{n-1}^P (X\times S^{n-1}) \rightarrow \]

Further, the map $\partial$ arises from a Mayer-Vietoris sequence associated to a decomposition of $Z$.
\end{lemma}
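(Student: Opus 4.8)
The plan is to realise the stated sequence as the Mayer--Vietoris sequence of an open, $P$-invariant decomposition $Z = A\cup B$, imitating the classical treatment of $CW$-complexes. Writing $q\colon (X\times D^n)\amalg Y\to Z$ for the quotient map, I would put $B = q[X\times\{v\in D^n : |v|<1\}]$ and $A = Z\setminus q[X\times\{0\}]$, so that $A = q[Y]\cup q[X\times\{v : 0<|v|\le 1\}]$. Since $P$ acts trivially on the disc coordinate, both sets are $P$-invariant; both are open in $Z$ (the preimage of $B$ is the open set $X\times\{|v|<1\}$, and the complement $q[X\times\{0\}]$ of $A$ is closed, its preimage being the closed set $X\times\{0\}$); clearly $A\cup B = Z$ and $A\cap B = q[X\times\{v : 0<|v|<1\}]$. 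The Mayer--Vietoris axiom then supplies a long exact sequence in $h_\ast^P$ for this decomposition, with connecting map $\partial$.

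The second step is to identify the terms, and here I would appeal to homotopy invariance. Because $P$ acts trivially on $D^n$, the obvious radial formulas are all equivariant: $X\times\{|v|<1\}$ $P$-deformation retracts onto $X\times\{0\}\cong X$; the open annular bundle $A\cap B$ retracts onto $X\times\{|v|=1/2\}\cong X\times S^{n-1}$; and $A$ retracts onto $q[Y]$ by pushing the punctured disc bundle $q[X\times\{0<|v|\le 1\}]$ radially out to the boundary sphere and then across by $f$. For the last of these one also checks that $q$ restricts to a homeomorphism $Y\stackrel{\cong}{\to}q[Y]$, which follows from continuity of $f$ together with closedness of $X\times S^{n-1}$ in $X\times D^n$. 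Granting these identifications, the Mayer--Vietoris sequence becomes the asserted one; its connecting map is $\partial$ by construction, naturality is inherited from naturality of the Mayer--Vietoris boundary map together with homotopy invariance, and chasing the retracts shows that the first map $h_n^P(X\times S^{n-1})\to h_n^P(X)\oplus h_n^P(Y)$ is, up to sign, $(p_\ast, -f_\ast)$, where $p\colon X\times S^{n-1}\to X$ is the projection.

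The one genuinely delicate point — and the only place where the hypotheses ``$f$ proper'' and ``cells cocompact'' are needed, rather than purely formal manipulation — is that homotopy invariance requires \emph{proper} $P$-homotopies, so the thickenings above must be chosen with care: one needs the inverse image under each retraction of a cocompact set to stay cocompact, which constrains the geometry of the collars used. I expect this to be the main obstacle, and I would try to discharge it either by a more careful choice of the neighbourhoods, or by first replacing $Y$ by the mapping cylinder of $f$ — which carries a proper $P$-deformation retraction onto $Y$ and exhibits the glued copy of $X\times S^{n-1}$ with a genuine collar — so as to make every map occurring in the construction visibly proper. Once that bookkeeping is in place, the rest of the argument is routine.
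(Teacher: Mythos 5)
Your proposal is correct and takes essentially the same route as the paper: cover $Z$ by two $P$-invariant open sets, one properly $P$-homotopy equivalent to $X$ (the core of the cell) and one to $Y$, with intersection equivalent to $X\times S^{n-1}$, then apply the Mayer--Vietoris axiom together with homotopy invariance. The paper simply asserts the existence of such a cover ($U\simeq_P X$, $V\simeq_P Y$, $U\cap V\simeq_P X\times S^{n-1}$) without constructing it, so your explicit choice of $A$ and $B$, the radial retractions, and the remark about properness of the homotopies amount to a more careful instantiation of the same argument.
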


\begin{proof}
Let $\pi \colon (X\times D^n)\amalg Y \rightarrow Z$ be the quotient map.  We can choose open neighbourhoods, $U$ and $V$, of $\pi [X\times D^n]$ and $\pi [Y]$ respectively such that:

\begin{itemize}

\item $PU\subseteq U$, $PV\subseteq V$.

\item $U\simeq_P X\times D^n\simeq_P X$.

\item $V\simeq_P Y$.

\item $U\cap V\simeq_P X\times S^{n-1}$.

\end{itemize}

Applying the Mayer-Vietories sequence of the decomposition $X=U\cup V$, along with homotopy invariance, we get a long exact sequence
\[ \rightarrow h_n^P (X\times S^{n-1}) \rightarrow h_n^P (X)\oplus h_n^P (Y) \rightarrow h_n^P (Z)\stackrel{\partial}{\rightarrow} h_{n-1}^P (X\times S^{n-1}) \rightarrow \]
\end{proof}

\begin{definition}
Let $h_\ast^P$ and $k_\ast^P$ be locally finite $P$-homology theories.  A {\em natural transformation} $\tau \colon h_\ast^P \rightarrow k_\ast^P$ is a sequence of natural transformations $\tau \colon h_n^P \rightarrow k_n^P$ that preserves Mayer-Vietoris sequences.
\end{definition}

\begin{lemma} \label{HM2}
Let $X$ be a $P$-space, and let $\tau \colon h_\ast^P \rightarrow k_\ast^P$ be a natural transformation between $P$-homology theories such that the maps $\tau \colon h_n^P (X) \rightarrow k_n^P (X)$ are isomorphisms.

Then the maps $\tau \colon h_n^P (X\times S^k)\rightarrow k_n^P (X\times S^k)$ are all isomorphisms.
\end{lemma}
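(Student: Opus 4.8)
The plan is to argue by induction on $k$. For the base case $k=0$, the sphere $S^0$ is two points, so $X\times S^0$ is the disjoint union of two copies of $X$, with $P$ acting trivially on the $S^0$ factor. Taking $A$ and $B$ to be these two copies — each open and $P$-invariant, with $A\cap B=\emptyset$ — the Mayer-Vietoris axiom together with $h_n^P(\emptyset)=\{0\}=k_n^P(\emptyset)$ shows that the inclusions induce isomorphisms $h_n^P(X)\oplus h_n^P(X)\stackrel{\cong}{\rightarrow}h_n^P(X\times S^0)$, and likewise for $k^P$. Since $\tau$ preserves Mayer-Vietoris sequences it carries the one square to the other, and as $\tau$ is an isomorphism on each copy of $X$ by hypothesis, it follows that $\tau\colon h_n^P(X\times S^0)\rightarrow k_n^P(X\times S^0)$ is an isomorphism for all $n$.

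For the inductive step, suppose $k\geq 1$ and that $\tau$ is an isomorphism on both $X$ and $X\times S^{k-1}$. Writing $S^k=D^k\cup_{S^{k-1}}D^k$, we have $X\times S^k=(X\times D^k)\cup_{X\times S^{k-1}}Y$ with $Y=X\times D^k$ and attaching map $f\colon X\times S^{k-1}\rightarrow Y$ the inclusion sending $(x,s)$ to $(x,s)$, viewing $S^{k-1}$ as the boundary of $D^k$. One checks that $f$ is proper: the preimage of a cocompact $Z=PK\subseteq X\times D^k$ is $P\cdot(K\cap(X\times S^{k-1}))$, which is cocompact since $K\cap(X\times S^{k-1})$ is compact and $P$ acts trivially on $S^{k-1}$; the projection $X\times D^k\rightarrow X$ and the zero-section inclusion $X\hookrightarrow X\times D^k$ are proper for the same reason, so $X\times D^k\simeq_P X$ via proper maps. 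Applying Lemma~\ref{HM1} to both $h^P$ and $k^P$ and using homotopy invariance to identify the middle terms, we obtain long exact sequences
\[ \rightarrow h_n^P(X\times S^{k-1})\rightarrow h_n^P(X)\oplus h_n^P(X)\rightarrow h_n^P(X\times S^k)\stackrel{\partial}{\rightarrow}h_{n-1}^P(X\times S^{k-1})\rightarrow \]
and similarly with $k^P$ in place of $h^P$.

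Because Lemma~\ref{HM1} guarantees that $\partial$ comes from a Mayer-Vietoris sequence for a decomposition of $X\times S^k$, and $\tau$ by definition preserves such sequences, $\tau$ induces a commutative ladder between these two long exact sequences. In it, the terms $h_n^P(X\times S^{k-1})$ and $h_{n-1}^P(X\times S^{k-1})$ map isomorphically by the inductive hypothesis, and the terms $h_n^P(X)\oplus h_n^P(X)$ map isomorphically by the hypothesis on $X$ (with homotopy invariance and naturality of $\tau$ used to transport along $X\times D^k\simeq_P X$). The five lemma then gives that $\tau\colon h_n^P(X\times S^k)\rightarrow k_n^P(X\times S^k)$ is an isomorphism for all $n$, completing the induction.

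The only genuinely delicate point is the claim that $\tau$ really does give a morphism of long exact sequences commuting with the connecting homomorphisms $\partial$. This is not automatic from naturality of the $\tau\colon h_n^P\rightarrow k_n^P$ alone; it relies essentially on the assertion in Lemma~\ref{HM1} that, after the homotopy-invariance identifications, the displayed sequence is an honest Mayer-Vietoris sequence attached to an open $P$-invariant decomposition of $Z=X\times S^k$, so that the defining property of a natural transformation of $P$-homology theories — preservation of Mayer-Vietoris sequences — applies directly. Everything else (properness of the maps involved, the identification $X\times D^k\simeq_P X$) is routine and was already used in the proof of Lemma~\ref{HM1}.
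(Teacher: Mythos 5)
Your proof is correct and follows essentially the same route as the paper: the identical $S^0$ base case via a Mayer--Vietoris sequence with empty intersection, followed by induction on $k$ using the hemisphere decomposition of $S^k$, homotopy invariance to identify $X\times D^k\simeq_P X$, and the five lemma applied to the ladder of Mayer--Vietoris sequences. The only cosmetic difference is that you route the inductive step through Lemma~\ref{HM1}, whereas the paper writes out the open decomposition $S^k=A\cup B$ directly; since Lemma~\ref{HM1} is itself proved by exactly that decomposition, the substance is the same.
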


\begin{proof}
Observe
\[ X\times S^0 = X_1 \amalg X_2 \]
where $X_1$ and $X_2$ are both equivariantly homeomorphic to $X$.  Certainly $X_1\cap X_2 = \emptyset$, so $h_n^P (X_1 \cap X_2 ) =0$ for all $n$, and the Mayer-Vietoris sequence tells us that $h_n^P (X\times S^0 ) = h_n^P (X)\oplus h_n^P (X)$.  Similarly, $k_n^P (X\times S^0 ) = k_n^P (X)\oplus k_n^P (X)$.

It follows immediately that the map $\tau \colon h_n^P (X\times S^0 ) \rightarrow k_n^P (X\times S^0)$ is an isomorphism.

Now suppose the map $\tau \colon h_n^P (X\times S^{k-1})\rightarrow k_n^P (X\times S^{k-1})$ is an isomorphism for all $n$.  We can write $S^k = A\cup B$, where $A\cong D^n$, $B\cong D^n$ and $A\cap B\simeq S^{k-1}$, so $X\times A \simeq_P B\times A\simeq_P X$.  Then we have a commutative diagram of Mayer-Vietoris sequences
\[ \arraycolsep 0pt \small \begin{array}{ccccccccc} 
h_n^P(X\times S^{k-1}) & \rightarrow & h_n^P (X)\oplus h_n (X) & \rightarrow & h_n^P(S^k\times X) & \rightarrow & h_{n-1}^P (S^{k-1} \times X) & \rightarrow & h_{n-1}^P (X)\oplus h_{n-1}^P(X) \\
\downarrow & & \downarrow & & \downarrow & & \downarrow & & \downarrow \\
k_n^P(X\times S^{k-1}) & \rightarrow & k_n^P (X)\oplus k_n (X) & \rightarrow & k_n^P(S^k\times X) & \rightarrow & k_{n-1}^P (S^{k-1} \times X) & \rightarrow & k_{n-1}^P (X)\oplus h_{n-1}^P(X) \\
\end{array} \]

By the five lemma, we see the map $\tau \colon h_n^P (X\times S^k)\rightarrow k_n^P (X\times S^k)$ is an isomorphism for all $n$.  The desired result now follows by induction
\end{proof}

\begin{theorem}
Let $\tau \colon h_\ast \rightarrow k_\ast$ be a natural transformation of $P$-homology theories such that $\tau \colon h_m^P (X)\rightarrow k_m^P (X)$ is an isomorphism whenever $X$ is a homogeneous $P$-space.  Then $\tau \colon h_m^P (Z) \rightarrow k_m^P (Z)$ is an isomorphism whenever $Z$ is a finite $P$-$CW$-complex.
\end{theorem}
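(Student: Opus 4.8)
The plan is to induct over a $CW$-decomposition $X^0 \subseteq X^1 \subseteq \cdots \subseteq X^n = Z$, using Lemma~\ref{HM1}, Lemma~\ref{HM2}, and the five lemma. First I would dispose of the $0$-skeleton. Here $X^0$ is a finite disjoint union $Y_1 \amalg \cdots \amalg Y_r$ of homogeneous $P$-spaces, and I would show $\tau$ is an isomorphism on it by a sub-induction on $r$: for $r=1$ this is the hypothesis, and for the step one decomposes $Y_1 \amalg \cdots \amalg Y_r = (Y_1 \amalg \cdots \amalg Y_{r-1}) \amalg Y_r$ (an open $P$-invariant decomposition with empty intersection) and feeds it into the Mayer--Vietoris sequence exactly as in the $S^0$ case of the proof of Lemma~\ref{HM2}; since $h^P_\ast(\emptyset) = 0$ one gets $h^P_m(X^0) \cong h^P_m(Y_1) \oplus \cdots \oplus h^P_m(Y_r)$ naturally and compatibly with $\tau$, and likewise for $k^P_\ast$, so $\tau$ is an isomorphism on $X^0$.

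For the inductive step, suppose $\tau \colon h^P_m(X^{k-1}) \to k^P_m(X^{k-1})$ is an isomorphism for all $m$; I want the same for $X^k$. By hypothesis $X^k$ is obtained from $X^{k-1}$ by attaching finitely many $k$-dimensional $P$-cells, so I would run a second (inner) induction, attaching the cells one at a time and showing that each attachment preserves the property. Thus let $Y$ be the $P$-space obtained so far ($\tau$ an isomorphism on it by the inner hypothesis, with base case $Y = X^{k-1}$), let $X$ be a homogeneous $P$-space, let $f \colon X \times S^{k-1} \to Y$ be the attaching map, and let $Z' = (X \times D^k) \cup_{X \times S^{k-1}} Y$. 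Lemma~\ref{HM1} gives a long exact sequence
\[ \cdots \to h^P_m(X \times S^{k-1}) \to h^P_m(X) \oplus h^P_m(Y) \to h^P_m(Z') \to h^P_{m-1}(X \times S^{k-1}) \to \cdots \]
which, crucially, arises from a Mayer--Vietoris sequence of a decomposition of $Z'$; since $\tau$ is a natural transformation it therefore maps this sequence to the corresponding one for $k^P_\ast$, producing a commutative ladder. In that ladder $\tau$ is an isomorphism on $h^P_\ast(X)$ (as $X$ is homogeneous), on $h^P_\ast(X \times S^{k-1})$ (apply Lemma~\ref{HM2} to $X$, legitimate since $k \geq 1$), and on $h^P_\ast(Y)$ (inner hypothesis), hence also on $h^P_m(X) \oplus h^P_m(Y)$; the five lemma then gives that $\tau$ is an isomorphism on $h^P_m(Z')$ for every $m$. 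This closes the inner induction, so $\tau$ is an isomorphism on $X^k$, and then the outer induction gives the result for $X^n = Z$.

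This is the standard $CW$-induction, so I do not anticipate a deep obstacle; the points needing care are all in the setup. First, one must know the attaching maps of a \emph{finite} $P$-$CW$-complex are proper, so that Lemma~\ref{HM1} genuinely applies — this is where cocompactness of finite $P$-$CW$-complexes, and of homogeneous $P$-spaces (via the compact set $\{[e]\}$), is used. Second, and most importantly, the argument uses in an essential way that a \emph{natural transformation} of $P$-homology theories is required to preserve Mayer--Vietoris sequences: this is precisely what upgrades the long exact sequence of Lemma~\ref{HM1} to a ladder on which the five lemma bites, and without it the induction fails. Third, one should record that the cell-attachment step is applied only for $k \geq 1$, the $0$-skeleton being handled separately, so Lemma~\ref{HM2} is only ever invoked for spheres $S^{k-1}$ of dimension $k-1 \geq 0$.
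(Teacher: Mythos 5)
Your proposal is correct and follows essentially the same route as the paper: induction over the skeleta of a $CW$-decomposition, handling the $0$-skeleton first and then attaching cells one at a time, with Lemma~\ref{HM1}, Lemma~\ref{HM2} and the five lemma applied to the commutative ladder of Mayer--Vietoris sequences supplied by naturality of $\tau$. In fact you spell out details the paper leaves implicit, such as the disjoint-union argument for $X^0$ and the inner induction over individual cell attachments.
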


\begin{proof}
Let $Z$ be a finite $P$-$CW$-complex.   Then we have subsets
\[ Z^0 \subseteq Z^1 \subseteq \cdots \subseteq Z^n = Z \]
where $Z^0$ is a finite disjoint union of homogeneous $P$-spaces, and $Z^k$ is equivariantly homeomorphic to the space obtained from $Z^{k-1}$ by attaching finitely many $k$-dimensional $P$-cells.

Certainly, the map $\tau \colon h_m^P (Z^0)\rightarrow k_m^P (Z^0)$ is an isomorphism for all $m$.  

Let $Y$ be a $P$-space such that the map $\tau \colon h_m^P (Y)\rightarrow h_m^P (Y)$ is an isomorphism for all $m$.  Suppose we have an attaching map $f\colon X\times S^{n-1}\rightarrow Y$, for a homogeneous $P$-space $X$.  Let $Y' = (X\times D^n)\cup_{X\times S^{n-1}}Y$.  Then it follows by lemma \ref{HM1}, lemma \ref{HM2} and the five lemma that the map $\tau \colon h_m^P (Y')\rightarrow h_m^P (Y')$ is an isomorphism for all $m$.

But this proves the desired result by induction.
\end{proof}

The following is proved similarly.

\begin{theorem} \label{freeiso}
Let $\tau \colon h_\ast \rightarrow k_\ast$ be a natural transformation of $P$-homology theories such that $\tau \colon h_m^P (P)\rightarrow k_m^P (P)$ for all $m$.  Then $\tau \colon h_m^P (Z) \rightarrow k_m^P (Z)$ is an isomorphism whenever $Z$ is a free finite $P$-$CW$-complex.
\end{theorem}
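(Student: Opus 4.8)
The plan is to run the same skeletal induction used in the proof of the preceding theorem, the only new input being that freeness lets us restrict attention to a single homogeneous $P$-space, namely $P$ itself. By the Proposition characterising free finite $P$-$CW$-complexes we may choose a $CW$-decomposition
\[ Z^0 \subseteq Z^1 \subseteq \cdots \subseteq Z^n = Z \]
in which every $k$-dimensional $P$-cell has the form $P\times D^k$; thus $Z^0$ is a finite disjoint union of copies of $P$, and each $Z^k$ is obtained from $Z^{k-1}$ by attaching finitely many cells $P\times D^k$ along proper equivariant maps $P\times S^{k-1}\rightarrow Z^{k-1}$.

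First I would treat the $0$-skeleton. Applying the Mayer--Vietoris sequence repeatedly to a disjoint union $A\amalg B$, written as the union of the two open sub-$P$-spaces $A$ and $B$ whose intersection is empty (so that $h_m^P$ and $k_m^P$ vanish on it by the third axiom), one obtains natural splittings $h_m^P(Z^0)\cong\bigoplus h_m^P(P)$ and $k_m^P(Z^0)\cong\bigoplus k_m^P(P)$ compatible with $\tau$; since $\tau\colon h_m^P(P)\rightarrow k_m^P(P)$ is an isomorphism for every $m$, so is $\tau\colon h_m^P(Z^0)\rightarrow k_m^P(Z^0)$. Next, Lemma \ref{HM2} applied with $X=P$ shows that $\tau\colon h_m^P(P\times S^k)\rightarrow k_m^P(P\times S^k)$ is an isomorphism for all $k$ and $m$.

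Now I would carry out the induction over cell attachments exactly as in the previous proof. Suppose $Y$ is a $P$-space on which $\tau$ is an isomorphism in every degree, and put $Y'=(P\times D^n)\cup_{P\times S^{n-1}}Y$ for some proper equivariant attaching map $f\colon P\times S^{n-1}\rightarrow Y$. By Lemma \ref{HM1} the relevant long exact sequences are Mayer--Vietoris sequences, and since $\tau$ preserves these by definition we obtain a commuting ladder between
\[ \rightarrow h_m^P(P\times S^{n-1})\rightarrow h_m^P(P)\oplus h_m^P(Y)\rightarrow h_m^P(Y')\stackrel{\partial}{\rightarrow}h_{m-1}^P(P\times S^{n-1})\rightarrow \]
and its counterpart for $k_\ast^P$. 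Here $\tau$ is an isomorphism on $h_m^P(P\times S^{n-1})$ by the previous paragraph, and on $h_m^P(P)\oplus h_m^P(Y)$ by hypothesis together with the inductive assumption on $Y$; the five lemma then gives that $\tau\colon h_m^P(Y')\rightarrow k_m^P(Y')$ is an isomorphism for all $m$. Starting from $Y=Z^0$ and adding one $P$-cell at a time, a finite iteration yields the claim for $Z=Z^n$.

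I do not anticipate a real obstacle: the argument is formally the same as for the preceding theorem, with the homogeneous spaces occurring there all replaced by $P$. The one place deserving a word of care is the $0$-skeleton, where one must check that the Mayer--Vietoris axiom applied to a disjoint union --- together with $h_\ast^P(\emptyset)=0$ --- genuinely computes $h_\ast^P(Z^0)$ as a direct sum and does so compatibly with $\tau$; but this is precisely the computation already performed for $X\times S^0$ inside the proof of Lemma \ref{HM2}.
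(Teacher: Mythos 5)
Your argument is correct and is exactly the proof the paper intends: the paper only remarks that the theorem "is proved similarly" to the preceding one, and your proposal carries out that same skeletal induction, using the freeness characterisation to reduce all cells to $P\times D^k$, Lemma \ref{HM2} with $X=P$, Lemma \ref{HM1}, and the five lemma. No issues beyond your correct reading of the hypothesis as "$\tau\colon h_m^P(P)\rightarrow k_m^P(P)$ is an isomorphism for all $m$" (a word the statement omits).
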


\section{Homotopy Fixed Point Sets}

Let $E$ be a free $P$-space.  Given an equivariant continuous map $f\colon X\rightarrow Y$, we have an induced map $f_\ast \colon \Map_P (E,X)\rightarrow \Map_P (E,Y)$ defined by the formula $f_\ast (g)(\lambda ) = f(g(\lambda ))$ where $g\in X^{hP}$ and $\lambda \in EP$.

\begin{proposition} \label{weco}
Let $E$ be a free $P$-space.  Let $X$ be (non-equivariantly) weakly contractible.  Then the space $\Map_P (E,X)$ is also weakly contractible.
\end{proposition}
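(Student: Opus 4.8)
The plan is to show that $\Map_P(E,X)$ is weakly contractible by verifying three things: it is non-empty, it is path-connected, and its higher homotopy groups vanish. Via the exponential law for the compact-open topology, all three reduce to a single equivariant extension problem. Indeed, a map $S^k\to\Map_P(E,X)$ is the same datum as a continuous map $g\colon S^k\times E\to X$ that is equivariant in the second variable for the $P$-action $p(s,\lambda)=(s,p\lambda)$, and a nullhomotopy of it is an equivariant extension over $D^{k+1}\times E$; likewise non-emptiness asks for an equivariant map $E\to X$ (the case $S^{-1}=\emptyset\subseteq D^{0}$), and path-connectedness asks for an equivariant homotopy $E\times[0,1]\to X$ between any two equivariant maps $E\to X$ (the case $S^{0}\subseteq D^{1}$). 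So it suffices to prove: for a finite $CW$-pair $(B,A)$ with $P$ acting trivially on it, every equivariant map $A\times E\to X$ extends to an equivariant map $B\times E\to X$.

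The main ingredient for this is that freeness makes $E$ locally a product with $P$, which converts the equivariant problem into an ordinary one. For $x\in E$ pick, as in the definition of freeness, an open $U\ni x$ with $pU\cap U=\emptyset$ for all $p\ne e$; using left cancellation one checks that the saturation $PU=\bigcup_{p\in P}pU$ is $P$-equivariantly homeomorphic to $U\times P$ with $P$ acting by left translation on the second factor. Over such a piece an equivariant map $U\times P\to X$ is given, freely, by its restriction to $U\times\{e\}$, so $\Map_P(U\times P,\,X)\cong\Map(U,X)$; since $X$ is weakly contractible this space is weakly contractible whenever $U$ is contractible, and more generally the assignment $V\mapsto\Map_P(V\times P,X)\cong\Map(V,X)$ carries a homotopy equivalence $V\to V'$ of reasonable spaces to one. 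Thus over each orbit neighbourhood the local extension/homotopy problem into $X$ is solvable.

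To globalise I would run the contravariant analogue of the Mayer--Vietoris machinery already used in the paper. If $E=E_1\cup E_2$ with $E_1,E_2$ open sub-$P$-spaces, then $\Map_P(E,X)$ is the pullback of $\Map_P(E_1,X)\to\Map_P(E_1\cap E_2,X)\leftarrow\Map_P(E_2,X)$; when the inclusions are sufficiently nice this is a homotopy pullback, so weak contractibility of the three corners forces it on $\Map_P(E,X)$ via the long exact sequence of a fibration and the five lemma, exactly as in Lemma~\ref{HM2} and Theorem~\ref{freeiso}. Decomposing $E$ into orbit neighbourhoods of the above kind (cell by cell when $E$ is a free $P$-$CW$-complex, and by an exhaustion argument in general) and inducting then yields the required extensions, and hence the proposition. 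I expect the real obstacle to be precisely this local-to-global passage: one must verify carefully that a free $P$-space is $P$-equivariantly locally a product with $P$ — this is where left cancellation and continuity of the translations come in — and one must arrange the gluing squares to be homotopy pullbacks, i.e. make the restriction maps $\Map_P(E_1,X)\to\Map_P(E_1\cap E_2,X)$ fibrations, which requires mild point-set hypotheses on the pieces of the decomposition. Once these are in place, the homotopy-group bookkeeping is routine.
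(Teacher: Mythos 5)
Your argument hinges on the claim that freeness gives an equivariant local product structure: that for a neighbourhood $U\ni x$ with $pU\cap U=\emptyset$ for all $p\neq e$, ``using left cancellation one checks'' that $PU$ is $P$-equivariantly homeomorphic to $U\times P$. For semigroups this is false as stated, and it is the decisive gap. Without inverses, the condition $pU\cap U=\emptyset$ for $p\neq e$ gives no control over $pU\cap qU$ for distinct $p,q\neq e$, and the natural map $P\times U\to PU$, $(p,u)\mapsto pu$, need not be injective: one cannot rewrite $pu=qu'$ as $r u=u'$ for some $r\in P$. Concretely, let $P=\N^2$ under addition act on $E=\N^2$ with the discrete topology by left addition; this is a free $P$-space, and $U=\{(1,0),(0,1)\}$ is a legitimate freeness neighbourhood of $(1,0)$ (if $p+u\in U$ with $u\in U$ and $p\in\N^2$ then $p=0$), yet $(1,0)+(0,1)=(0,1)+(1,0)$, so $P\times U\to PU$ is not injective; indeed $PU=\N^2\setminus\{0\}$ is not even abstractly isomorphic as a $P$-set to $P\amalg P$ (two disjoint sub-$P$-sets of the form $a+\N^2$ and $b+\N^2$ covering it would have to be based at $(1,0)$ and $(0,1)$, and both contain $(1,1)$). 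So the ``check'' is not routine: you would have to show $U$ can always be shrunk so that distinct translates are disjoint and the resulting bijection is a homeomorphism, and nothing in the paper's definition of freeness obviously supplies this. (Interestingly, the paper's own proof leans on the same point when it asserts that $E\to E/P$ is a covering map with fibre $P$; you have located the crux correctly, but treating it as a one-line verification is exactly where the proposal breaks.)

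Even granting local triviality, the local-to-global step is a plan rather than a proof, and it is where your route diverges from the paper's. The proposition concerns an arbitrary free $P$-space $E$: no $CW$-structure, finiteness or cocompactness is assumed, so the cell-by-cell induction is unavailable, and the ``exhaustion argument in general'' would need the restriction maps $\Map_P(E_1,X)\to\Map_P(E_1\cap E_2,X)$ to be fibrations, the gluing squares to be homotopy pullbacks, and a homotopy inverse limit argument (with the attendant $\varprojlim^1$ issues) to handle an infinite cover; the covariant Mayer--Vietoris results you invoke (Lemma \ref{HM2}, Theorem \ref{freeiso}) are statements about finite $P$-$CW$-complexes and do not give this contravariant machinery. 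The paper instead globalises in a single step, using the (claimed) fibration $E\to E/P$ and the resulting sequence relating $\Map(E/P,X)$, $\Map_P(E,X)$ and $\Map_P(P,X)\cong X$. A further small caveat: your assertion that $\Map(U,X)$ is weakly contractible because $X$ is requires justification for non-$CW$ $U$, since nullhomotoping a map $S^k\times U\to X$ by extending over $D^{k+1}\times U$ is an obstruction-theoretic statement. As it stands, both the local structure claim and the globalisation are open, so the proposal does not yet establish the proposition.
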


\begin{proof}
By definition of freeness, the quotient map $\pi \colon E\rightarrow E/P$ is a covering map, and therefore a fibration, with fibre $P$.  Hence the induced map $\pi^\ast \colon \Map_P (E/P,X)\rightarrow \Map_P (E,X)$ defined by the formula $\pi^\ast (f) = f\circ \pi$ is a cofibration, with cofibre $\Map_P (P,X)$.

Now, since $X$ is weakly contractible, so are the spaces $\Map_P (P,X) \cong X$ and $\Map_P (E/P,X) = \Map (E/P , X)$.  Looking at the long exact sequence of homotopy groups associated to the cofibration $\pi^\ast$, it follows that the space $\Map_P (E,X)$ is also weakly contractible.
\end{proof}

The following immediately follows from the above by looking at mapping cones.

\begin{corollary} \label{maco}
Let $X$ and $Y$ be $P$-spaces, and let $f\colon X\rightarrow Y$ be an equivariant map that is (non-equivariantly) a weak equivalence.  Then the induced map $f_\ast \colon \Map_P (E,X)\rightarrow \Map_P (E,Y)$ is a weak equivalence.
\noproof
\end{corollary}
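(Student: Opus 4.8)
The plan is to deduce Corollary~\ref{maco} from Proposition~\ref{weco} by the usual mapping-cone device, upgrading ``weakly contractible target'' to ``weak equivalence of targets''. First I would replace $f$ by a cofibration: form the mapping cylinder $M_f=(X\times[0,1])\cup_f Y$, carrying the $P$-action which is $p(x,t)=(px,t)$ on $X\times[0,1]$ and $py$ on $Y$; this is well defined because $f$ is equivariant. The inclusion $j\colon X\hookrightarrow M_f$ of the free end of the cylinder is an equivariant closed cofibration, and the canonical projection $q\colon M_f\to Y$ is an equivariant strong deformation retraction. Since $\Map_P(E,-)$ carries $P$-homotopies to homotopies, $q_\ast\colon\Map_P(E,M_f)\to\Map_P(E,Y)$ is a homotopy equivalence; as $f=q\circ j$, it therefore suffices to show that $j_\ast\colon\Map_P(E,X)\to\Map_P(E,M_f)$ is a weak equivalence. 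By two-out-of-three, $j$ is again a non-equivariant weak equivalence.

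Next I would form the mapping cone $C=M_f/j(X)$, a based $P$-space whose basepoint is fixed by $P$. Because $j$ is a closed cofibration, $C$ represents the homotopy cofibre of $f$; the homotopy cofibre of a weak equivalence is weakly contractible, so $C$ is non-equivariantly weakly contractible, and Proposition~\ref{weco} gives that $\Map_P(E,C)$ is weakly contractible. Finally I would push $\Map_P(E,-)$ through the cofibre sequence $X\stackrel{j}{\to}M_f\to C$ and its Puppe prolongation, obtaining a long exact sequence of homotopy groups relating $\pi_\ast\Map_P(E,X)$, $\pi_\ast\Map_P(E,M_f)$ and $\pi_\ast\Map_P(E,C)$. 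Since all of the groups $\pi_\ast\Map_P(E,C)$ vanish, the connecting maps force $j_\ast$ to be an isomorphism on all homotopy groups, and together with the first step this shows $f_\ast$ is a weak equivalence.

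The hard part is this last step: $\Map_P(E,-)$ is covariant and does not literally convert cofibre sequences into fibre sequences of arbitrary spaces, so the long exact sequence must be justified. I expect to do this exactly as in the proof of Proposition~\ref{weco}, where freeness of $E$ is used to make $E\to E/P$ a covering and hence a fibration; this lets the comparison of mapping cones be recast as a comparison of fibrations, and then the five lemma finishes it. An equivalent route I would be equally happy to follow is to first replace $f$ by an equivariant Hurewicz fibration using the equivariant mapping path space, so that $f_\ast$ itself becomes a fibration whose fibre over $g\in\Map_P(E,Y)$ is the space of equivariant sections of the pulled-back fibration $g^\ast X\to E$; that section space is weakly contractible because the fibres of $g^\ast X\to E$ are, which is once more Proposition~\ref{weco} for a trivial bundle, extended over the free $P$-space $E$ by the same covering-space argument.

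In either presentation the only genuinely new input beyond Proposition~\ref{weco} is bookkeeping, so I would expect the write-up to be short; the point-set subtlety above is the one place I would be careful to cite the freeness of $E$ rather than treat $\Map_P(E,-)$ as if it were exact.
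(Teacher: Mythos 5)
You are following the same route the paper itself sketches (its entire proof is the phrase ``follows from the above by looking at mapping cones''), and your preliminaries are fine: the equivariant mapping cylinder and cone, the fact that the cofibre of a weak equivalence is weakly contractible, and Proposition~\ref{weco} applied to the cone. The genuine gap is exactly the step you flag: there is no long exact sequence of homotopy groups obtained by ``pushing $\Map_P(E,-)$ through the cofibre sequence''. A covariant mapping-space functor does not convert cofibre sequences into fibre sequences; that is what the contravariant functor $\Map (-,Z)$ does. Already with $P$ trivial and $E=S^1$, applying $\Map (S^1,-)$ to $S^0\hookrightarrow D^1\to S^1$ gives a two-point space, a contractible space and the free loop space $\Map (S^1,S^1)$, and these form no fibre sequence (the loop space of $\Map (S^1,S^1)$ at a constant loop has $\pi_0\cong\Z$, not two elements). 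Worse, if your argument template were valid, then in the degenerate case $E=P=\{e\}$ it would prove that any map with weakly contractible mapping cone is a weak equivalence, which is false (e.g.\ the plus construction $X\to X^+$ with respect to a nontrivial perfect normal subgroup). So weak contractibility of $\Map_P(E,C)$ alone cannot yield the conclusion; some fibration-theoretic input is unavoidable, and your suggestion to justify the sequence ``exactly as in the proof of Proposition~\ref{weco}'' does not supply it, since that proof compares mapping spaces as the \emph{source} varies ($E$, $E/P$, $P$) and says nothing about cofibre sequences in the target variable.

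Your second route --- replace $f$ by an equivariant Hurewicz fibration via the mapping path space, so that $f_\ast$ becomes a fibration whose fibres are equivariant section spaces of pullbacks with weakly contractible fibres --- is the standard correct strategy, but it is not mere bookkeeping. You would need (i) that $\Map_P(E,-)$ carries equivariant fibrations to fibrations (an equivariant lifting-function or descent-to-$E/P$ argument), and (ii) a parametrised strengthening of Proposition~\ref{weco}: the space of equivariant sections of a fibration over the free $P$-space $E$ with weakly contractible fibres is weakly contractible. Statement (ii) is not Proposition~\ref{weco} (which is the case of a product bundle) and needs its own covering-space or obstruction-theoretic proof, with niceness hypotheses on $E/P$ of the kind the paper suppresses. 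As it stands, your proposal correctly isolates the difficulty but does not close it; either carry out the fibration argument in full, or supplement the mapping-cone presentation with proofs of (i) and (ii).
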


\begin{definition}
We define the {\em classifying space for free $P$-actions}, $EP$, to be any free $P$-space that is weakly contractible.
\end{definition}

\begin{proposition}
The space $EP$ exists, and is unique up $P$-homotopy equivalence.
\end{proposition}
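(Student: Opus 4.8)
The plan is to treat existence and uniqueness separately, in both cases relying on the results already established in this section.

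For existence, there is nothing to do beyond invoking Example \ref{ij}: the infinite join $P \ast P \ast P \ast \cdots$ is stated there to be a free and weakly contractible $P$-space, hence is a model for $EP$ by the definition just given.

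For uniqueness, suppose $E$ and $E'$ are both free weakly contractible $P$-spaces. First I would apply Proposition \ref{weco} with the free space $E$ and the weakly contractible space $E'$: this shows $\Map_P (E, E')$ is weakly contractible, hence non-empty and path-connected. Non-emptiness produces a continuous equivariant map $f \colon E \rightarrow E'$, and symmetrically a continuous equivariant map $g \colon E' \rightarrow E$. Next, applying Proposition \ref{weco} with the free space $E$ and the weakly contractible space $E$ itself shows $\Map_P (E, E)$ is path-connected, so the two points $g \circ f$ and $\id_E$ of this space are joined by a path, which is precisely a $P$-homotopy $g \circ f \simeq_P \id_E$. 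In the same way $f \circ g \simeq_P \id_{E'}$, so $f$ is a $P$-homotopy equivalence and $E \simeq_P E'$.

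The only delicate point — and the closest thing to an obstacle — is the identification of a path in $\Map_P (E, E)$, taken with the compact-open topology, with a $P$-homotopy $E \times [0,1] \rightarrow E$ in the sense of the definition at the start of Section \ref{EH}. This is the standard exponential-law adjunction and causes no trouble for the spaces that actually occur here (for instance the $CW$ model given by the infinite join, or more generally compactly generated spaces). Granting that, the statement is a purely formal consequence of Proposition \ref{weco}.
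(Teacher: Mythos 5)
Your proposal is correct and follows essentially the same route as the paper: existence via the infinite join of Example \ref{ij}, and uniqueness by using Proposition \ref{weco} to make the relevant equivariant mapping spaces weakly contractible, extracting maps $f$ and $g$ from non-emptiness and the homotopies $g\circ f\simeq_P \id$, $f\circ g\simeq_P \id$ from triviality of $\pi_0$. Your explicit remark about the exponential-law identification of paths in $\Map_P(E,E)$ with $P$-homotopies is a point the paper passes over silently, but it does not change the argument.
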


\begin{proof}
By proposition \ref{ij}, a free and weakly contractible $P$-space $EP$ exists.  Let $X$ be another free and weakly contractible $P$-space.  Then by proposition \ref{weco}, the spaces $\Map_P (EP,X)$ and $\Map_P (X,EP)$ are weakly contractible.  In particular, they are non-empty, so we have  continuous equivariant maps $f\colon EP\rightarrow X$ and $g\colon X\rightarrow EP$.

Similarly, the spaces $\Map_P (EP,EP)$ and $\Map_P (X,X)$ are weakly contractible, so the sets  $\pi_0 \Map_P (EP,EP)$ and $\pi_0 (X,X)$ are trivial.  Since $g\circ f, \id_{EP}\in \Map_P (EP,EP)$, they must be $P$-homotopic.  Similarly, $f\circ g, \id_P \in \Map_P (X,X)$ are $P$-homotopic.  In other words, the composites $g\circ f$ and $f\circ g$ are both $P$-homotopic to identity maps, and we are done.
\end{proof}

\begin{definition}
Let $X$ be a $P$-space.  We define the {\em homotopy fixed point set} of $X$ to be the space $X^{hP} = \Map_P (EP,X)$.
\end{definition}

Up to homotopy, the space $X^{hP}$ does not depend on which version of the space $EP$ we have chosen.  Further, by proposition \ref{weco} and corollary \ref{maco}, if $X$ is weakly contractible, then so is $X^{hP}$, and if $f\colon X\rightarrow Y$ is a weak equivalence, then so is $f_\ast \colon X^{hP}\rightarrow Y^{hP}$.

Now, observe that if $X$ is a $P$-space, then the semigroup $P$ acts on the $C^\ast$-algebras $C^\ast (X)$ and $D^\ast (X)$ on the left by $\ast$-homomorphisms.  We can therefore form homotopy fixed point sets $C^\ast (X)^{hP}$ and $D^\ast (X)^{hP}$.  These sets are $C^\ast$-algebras, with addition, multiplication and involution defined pointwise, and the norm defined by taking the supremum
\[ \| f\| = \sup \{ \| f(x) \| \ |\ x\in EP \} \]
for $f\in C^\ast (X)^{hP}$ or $f\in D^\ast (X)^{hP}$.  Further, $C^\ast (X)^{hP}$ is a $C^\ast$-ideal in $D^\ast (X)^{hP}$, so we can form the quotient $D^\ast (X)^{hP}/C^\ast (X)^{hP}$.

Let us write
\[  K_n^{hP} (X) = K_{n+1} \left(  \frac{D^\ast (X)^{hP}}{C^\ast _P(X)^{hP}} \right) . \]

\begin{proposition} \label{hpg}
The sequence of functors $K_\ast^{hP}$ is a locally finite $P$-homology theory.
\end{proposition}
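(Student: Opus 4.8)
The plan is to check the three axioms of a locally finite $P$-homology theory in turn --- homotopy invariance, the Mayer--Vietoris long exact sequence, and vanishing on the empty set --- by reducing each to its non-equivariant counterpart for the coarse functors $K_\ast C^\ast(-)$, $K_\ast D^\ast(-)$ and $K_\ast(D^\ast(-)/C^\ast(-))$ from \cite{HR1,Roe1}, and then transporting the conclusion through the homotopy fixed-point functor $(-)^{hP} = \Map_P(EP,-)$. Vanishing is immediate: $C^\ast(\emptyset) = D^\ast(\emptyset) = 0$, so the quotient and hence $K_n^{hP}(\emptyset)$ is trivial. The point of the reduction is that the coarse-geometric constructions are natural in the underlying space, so, applied to the $P$-equivariant $C^\ast$-algebras $C^\ast(X) \subseteq D^\ast(X)$ occurring here, they produce $P$-equivariant data --- equivariant $\ast$-homomorphisms, equivariant homotopies, and equivariant short exact sequences --- to which $(-)^{hP}$ may be applied.

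First I would isolate the key lemma, namely that $(-)^{hP}$ is exact up to $K$-theory: for any short exact sequence $0 \to I \to A \to A/I \to 0$ of $P$-$C^\ast$-algebras, the natural $\ast$-homomorphism $A^{hP}/I^{hP} \to (A/I)^{hP}$ is a $K$-theory isomorphism, so that (combining with the evident short exact sequence $0 \to I^{hP} \to A^{hP} \to A^{hP}/I^{hP} \to 0$) the triple $I^{hP} \to A^{hP} \to (A/I)^{hP}$ is a fibration sequence in $K$-theory. To prove this I would compute $A^{hP} = \Map_P(EP,A)$ through the skeletal filtration of $EP$ by free $P$-cells $P \times D^k$ attached along $P \times S^{k-1}$: using the natural identifications $\Map_P(P \times D^k, A) \cong C(D^k,A)$ and $\Map_P(P \times S^{k-1},A) \cong C(S^{k-1},A)$, each attachment exhibits the next stage of $A^{hP}$ as a homotopy pullback, over restriction maps that are fibrations, of stages assembled from the functors $A \mapsto C(Y,A) \cong C(Y) \otimes A$ for finite $CW$-complexes $Y$. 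Since $C(Y)$ is nuclear, each of these functors preserves short exact sequences and hence $K$-theory fibration sequences, and these survive the (homotopy) limit; the identification of $K_\ast(A^{hP}/I^{hP})$ with $K_\ast((A/I)^{hP})$ then follows by comparing the resulting long exact sequences via the five lemma.

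Granting the lemma, homotopy invariance is short: a proper $P$-homotopy $H \colon X \times [0,1] \to Y$ displays $f$ and $g$ as coarsely homotopic proper equivariant maps, and the proof of coarse homotopy invariance in \cite{HR1,Roe1}, being a chain of natural maps and of equalities coming from coarse maps at bounded distance, gives $f_\ast = g_\ast$ equivariantly on each of $K_\ast(C^\ast(-))$, $K_\ast(D^\ast(-))$, $K_\ast(D^\ast(-)/C^\ast(-))$; applying $(-)^{hP}$ --- which sends equivariant homotopies to homotopies, since $(C[0,1] \otimes B)^{hP} \cong C[0,1] \otimes B^{hP}$ --- and invoking the lemma yields $f_\ast = g_\ast$ on $K_\ast^{hP}$. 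For Mayer--Vietoris associated to $X = A \cup B$ with $A,B$ open and $P$-invariant, the coarse Mayer--Vietoris principle of \cite{Roe1} provides the natural short exact sequence
\[ 0 \to C^\ast(A \cap B) \to C^\ast(A) \oplus C^\ast(B) \to C^\ast(X) \to 0 \]
(diagonal inclusion, then difference of inclusions), together with its analogue for $D^\ast$ and, by the nine lemma, for the quotient $D^\ast(-)/C^\ast(-)$. These sequences are $P$-equivariant by naturality; applying $(-)^{hP}$, which preserves finite direct sums, and the lemma produces Mayer--Vietoris sequences for $K_\ast(C^\ast(-)^{hP})$, $K_\ast(D^\ast(-)^{hP})$ and $K_\ast((D^\ast(-)/C^\ast(-))^{hP}) \cong K_{\ast+1}^{hP}(-)$, the last being the desired sequence, with $\partial$ the connecting homomorphism of the short exact sequence.

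The main obstacle will be the exactness lemma for $(-)^{hP}$. The functor $\Map_P(EP,-)$ is only left exact on the nose, so its near-exactness genuinely uses that $EP$ is a free $P$-space with a cellular model built from copies of $P \times D^k$; the delicate bookkeeping is to organize $A^{hP}$ as a homotopy limit of the finite-$CW$ building blocks $A \mapsto C(Y)\otimes A$ so that $K$-theory carries fibration sequences through that limit. A secondary point to verify is that the coarse Mayer--Vietoris principle --- in particular the identity $C^\ast(A) + C^\ast(B) = C^\ast(X)$ underlying the short exact sequence above --- is in force for the open $P$-invariant covers permitted by the axiom; this holds for the (coarsely excisive) covers arising from attaching $P$-cells in Lemma \ref{HM1}, which are the only ones needed elsewhere in the paper.
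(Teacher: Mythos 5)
Your route is genuinely different from the paper's, and the difference matters. The paper never invokes any algebra-level exactness of $(-)^{hP}$: it passes to the stable unitary group $U(X)$ of $D^\ast(X)/C^\ast(X)$, identifies $K_\ast^{hP}(X)$ with homotopy groups of the mapping space $U(X)^{hP}=\Map_P(EP,U(X))$, and then uses corollary \ref{maco} to push the non-equivariant inputs (the weak equivalences $U(X)\to U(X\times[0,1])$ for homotopy invariance, and the weak fibration sequence $U(A\cap B)\to U(A)\vee U(B)\to U(X)$ for Mayer--Vietoris) through $\Map_P(EP,-)$, reading off the long exact sequences from homotopy groups. Your reduction to the key lemma that $A^{hP}/I^{hP}\to(A/I)^{hP}$ is a $K$-theory isomorphism is precisely the step this formulation is built to avoid, and your proposed proof of that lemma has a real gap: $A^{hP}=\Map_P(EP,A)$ is an honest inverse limit of the $C^\ast$-algebras $\Map_P(EP^{(k)},A)$ over the skeleta, and $K$-theory of $C^\ast$-algebras does not commute with inverse limits; even where a Milnor $\lim^1$ sequence can be arranged, the clean five-lemma comparison you want is obstructed by the $\lim^1$ terms, so ``these survive the (homotopy) limit'' is exactly the point that needs (and currently lacks) an argument. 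In addition, the paper's $EP$ is only required to be a free, weakly contractible $P$-space (the Milnor join of example \ref{ij}), not a free $P$-$CW$-complex, so the skeletal filtration by free cells $P\times D^k$ that drives your induction is an extra structure you would have to construct or justify replacing $EP$ by.

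The Mayer--Vietoris step is also not covered by your lemma as stated. The sequence $0\to C^\ast(A\cap B)\to C^\ast(A)\oplus C^\ast(B)\to C^\ast(X)\to 0$ with the difference map is not a sequence of $\ast$-homomorphisms (the difference of inclusions is not multiplicative), so it is not a short exact sequence of $P$-$C^\ast$-algebras and the nine lemma/your key lemma cannot be applied to it directly. The genuine coarse Mayer--Vietoris input is the ideal decomposition $C^\ast(X)=C^\ast(A)+C^\ast(B)$ with $C^\ast(A)\cap C^\ast(B)=C^\ast(A\cap B)$ for coarsely excisive covers (with $C^\ast(A)$ meaning operators supported near $A$ in the ambient module), and to push that through your framework you would further need $(-)^{hP}$ to commute with sums of ideals --- an equivariant map into $I+J$ need not split as a sum of maps into $I$ and into $J$ --- which your lemma does not address. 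To be fair, the paper itself asserts without proof the identification of $K_{n+1}(D^\ast(X)^{hP}/C^\ast(X)^{hP})$ with homotopy groups of $U(X)^{hP}$, which is a cousin of your lemma; but as a proof of proposition \ref{hpg}, your sketch stalls at these two points, whereas the paper's argument (granting that identification) only needs corollary \ref{maco}.
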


\begin{proof}
Let $U(X)$ be the stable unitary group of the $C^\ast$-algebra $D^\ast (X)/C^\ast (X)$.  Then the groups $K_{n-1}(X)$ and $K_{n-1}^{hP}(X)$ are, respectively, the homotopy groups of the groups $U(X)$ and $U(X)^{hP}$ respectively.

By proper homotopy-invariance of $K$-homology, the inclusions $i_0,i_1\colon X\rightarrow X\times [0,1]$ defined by the formulae $i_0 (x) = (x,0)$ and $i_1 (x) = (x,1)$ respectively, induce weak equivalences $U(X)\rightarrow U(X\times [0,1])$.  By corollary \ref{maco}, these maps both induce weak equivalences $U(X)^{hG} \rightarrow U(X\times [0,1])^{hG}$, and so isomorphisms $K_n^{hP}(X)\rightarrow K_n^{hP} (X\times [0,1])$.  Proper $P$-homotopy-invariance of the functors $K_n^{hP}$ now follows.

Let $X=A\cup B$ be a $P$-space, where we can write $A,B\subseteq X$ are open, and $PA\subseteq A$, $PB\subseteq B$.  Then by looking at Mayer-Vietories sequences in $K$-homology, we have a weak fibration sequence
\[ U(A\cap B) \rightarrow U(A)\vee U(B) \rightarrow U(X) \]
and so, by corollary \ref{maco}, a weak fibration sequence
\[ U(A\cap B)^{hP} \rightarrow U(A)^{hP}\vee U(B)^{hP} \rightarrow U(X)^{hP} . \]

The existence of Mayer-Vietoris sequences for the sequence of functors $K_\ast^{hP}$ now also follows.
\end{proof}

\section{Semigroup $C^\ast$-algebras and assembly}

Let $P$ be a semigroup.  Let $l^2 (P)$ be the Hilbert space with an orthonormal basis indexed by $P$, that is to say we have an orthonormal basis $\{ e_p \ |\ p\in P \}$.

Given $p\in P$, we have an isometry $v_p \colon l^2 (P)\rightarrow l^2 (P)$ defined by the formula $v_p (e_q) = e_{pq}$.  Note that for this to be an isometry, we need the left-cancellation property.

The following definition comes from \cite{Li}.

\begin{definition}
The {\em reduced semigroup $C^\ast$-algebra}, $C^\ast_r (P)$, is the smallest $C^\ast$-subalgebra of the $C^\ast$-subalgebra of the bounded linear operators ${\mathcal L}(l^2 (P))$ that contains the set of isometries $\{ v_p \ |\ p\in P \}$.
\end{definition}

Note that reduced group $C^\ast$-algebras are an obvious special case.

Now, let $X$ be a coarse $P$-space.  Then the $C^\ast$-algebra $C_0 (X)$ is equipped with a right $P$-action defined by writing $(fp)(x) = f(px)$ for all $f\in C_0(X)$, $p\in P$ and $x\in X$.

Let $H$ be a Hilbert space equipped with a left $P$-action by bounded linear maps.  Let ${\mathcal L}_P (H)$ be the $C^\ast$-algebra of equivariant bounded linear operators on $H$.  Then we call $H$ an {\em equivariant $H$-module} if it comes equipped with a $\ast$-homomorphism $\rho \colon C_0 (X)\rightarrow {\mathcal L}_P(H)$, 

\begin{definition}
Let $H$ be an ample equivariant $X$-module.  Then we define $D^\ast_P (X)$ to be the smallest $C^\ast$-subalgebra of ${\mathcal L}_P (H)$ containing all pseudolocal and controlled operators.

We define $C^\ast_P (X)$ to be the smallest $C^\ast$-subalgebra of ${\mathcal L}_P (H)$ containing all locally compact and controlled operators.
\end{definition}

\begin{theorem}
Let $P$ be a semigroup.  Let $X$ be a cocompact coarse $P$-space.  Then the $C^\ast$-algebras $C^\ast_r (P)$ and $C^\ast_P (X)$ are Morita equivalent.
\end{theorem}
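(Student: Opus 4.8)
The plan is to adapt the standard proof that, for a cocompact proper $G$-space $Z$, the equivariant Roe algebra $C^\ast_G (Z)$ is Morita equivalent to $C^\ast_r (G)$ (see \cite{Roe1}, or in the generality needed here \cite{Mitch13}): I would show directly that $C^\ast_P (X)\cong C^\ast_r (P)\otimes {\mathcal K}(H_0)$ for a suitable separable Hilbert space $H_0$, and then invoke the standard fact that $A\otimes {\mathcal K}(H_0)$ is Morita equivalent to $A$. First I would record that, just as in the non-equivariant theory, the isomorphism type of $C^\ast_P (X)$ does not depend on the choice of ample equivariant $X$-module (this is the usual covering-isometry argument of \cite{HR1,Roe1}), so we may work with whichever module is convenient. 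Using cocompactness, fix a compact $K\subseteq X$ with $X=PK$, enlarged if necessary so that $L^2 (K)$ is infinite-dimensional, put $H_0 = L^2 (K)\otimes l^2 (\N)$, and take $H = l^2 (P)\otimes H_0$. Let $P$ act on $H$ through the isometries $v_p\otimes 1$ with $v_p (e_q)=e_{pq}$ --- the very isometries used to build $C^\ast_r (P)$, well-defined by left cancellation --- and let $C_0 (X)$ act by $\rho (f)=M_f\otimes 1$, where $M_f$ acts on $l^2 (P)\otimes L^2 (K)$ by multiplication by the bounded function $(p,k)\mapsto f(pk)$. One checks that this makes $H$ an ample $X$-module for which the covariance relation $v_p^\ast\,\rho (f)\,v_p = \rho (fp)$ holds, with $fp$ the right translate $(fp)(x)=f(px)$; in particular the generators $v_p$ of $C^\ast_r (P)$ sit inside the multiplier algebra of $C^\ast_P (X)$.

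Next I would identify the algebra. Given an equivariant, controlled, locally compact operator $T$ on $H$, controlledness together with the locally compact condition, tested against appropriate $f\in C_0 (X)$, forces the matrix entries of $T$ relative to the decomposition $H=l^2 (P)\otimes H_0$ to lie in ${\mathcal K}(H_0)$, while equivariance ($Tv_p = v_p T$) forces these entries to be $P$-invariant, so that $T$ is determined by a single row $(T_{e,p})_{p\in P}$ of compact operators; controlledness translates into decay of this row in the $C^\ast_r (P)$-norm. These are precisely the elements of $C^\ast_r (P)\otimes {\mathcal K}(H_0)$, and conversely every such element is controlled, equivariant and locally compact. This gives $C^\ast_P (X)\cong C^\ast_r (P)\otimes {\mathcal K}(H_0)$ and hence the theorem.

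I expect the genuine obstacle to be the passage from $X=PK$ to a picture in which $H$ splits as a ``$P$-indexed family of copies of $H_0$'' compatibly with the $v_p$, which is what makes the matrix-entry bookkeeping above legitimate. When the action is free --- in particular for $X=\Cay (P;S)$ --- each translation $p\colon X\to X$ is injective and the compact sets $pK$ can be refined to a genuinely disjoint Borel decomposition $X=\bigsqcup_{p\in P}F_p$ with $F_p\subseteq pK$, along which everything splits cleanly; for a general cocompact coarse $P$-space the sets $pK$ overlap and $p$ need not be injective, so producing such a decomposition (and checking it recombines correctly under the $P$-action) is the one step carrying semigroup-specific content. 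I would attempt this by fixing a well-ordering of $P$ with least element $e$ and setting $F_p = pK\setminus\bigcup_{q<p}qK$, then using left cancellation to verify compatibility with the isometries $v_p$; once this is in place, the remainder of the argument is the same bookkeeping as in the group case.
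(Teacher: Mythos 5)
Your proposal follows essentially the same route as the paper: the paper likewise takes the module $M = l^2(P)\otimes L^2(K)\otimes l^2(\N)$ with $C_0(X)$ acting by multiplication pulled back along $(p,k)\mapsto pk$, identifies the equivariant, controlled, locally compact operators with $C^\ast_r(P)\otimes {\mathcal K}(L^2(K)\otimes l^2(\N))$, and deduces the Morita equivalence from stability. The additional steps you anticipate --- independence of the choice of ample equivariant module, and the well-ordered Borel partition $F_p = pK\setminus\bigcup_{q<p}qK$ --- play no role in the paper's argument, which works directly with this single module and never requires a disjoint decomposition of $X$.
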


\begin{proof}
Choose a compact subset $K\subseteq X$ such that $X=KP$.  Equip $K$ with a Borel measure.  Define a Hilbert space
\[ H = l^2 (P) \otimes L^2 (K) . \]

We have a left action of $P$ on $H$ defined by the formula
\[ p (v\otimes F) = pv \otimes F \qquad p\in P,\ v\in l^2 (P),\ F\in L^2 (K) . \]

Given $f\in C_0 (X)$ and $F\in L^2 (K)$, we have a function $f\cdot F \in L^2 (K)$ defined by pointwise multiplication.  We have a $\ast$-homomorphism $\rho_0 \colon C_0 (X)\rightarrow {\mathcal L}(H)$ defined by the formula
\[ \rho_0 (f) (v\otimes F ) = f(v) \otimes f\cdot F . \]

Let $M = H\otimes l^2 (\N )$.  Extend the $\ast$-homomorphism $\rho_0$ to a $\ast$-homomorphism $\rho \colon C_0 (X) \rightarrow {\mathcal L}(H)$ by acting trivially on the second factor.  Then $M$ is an ample $X$-module.

Observe that $C^\ast_r (P) \otimes {\mathcal K}(L^2 (K) \otimes l^2 (\N ))$ is a $C^\ast$-subalgebra of ${\mathcal L}(M)$, and any operator of the form $v_p \otimes k$, where $p\in P$ and $k$ is compact, is both controlled and locally compact.

Further, any controlled and locally compact operator on $M$ can be expressed as a finite sum of operators of this form.  Taking the $C^\ast$-completions, we see that
\[ C^\ast_r (P) \otimes {\mathcal K}(L^2 (K) \otimes l^2 (\N )) = C^\ast_P (X) \]
and we are done.
\end{proof}

Now, let $X$ be any $P$-space.  Then we have a short exact sequence
\[ 0 \rightarrow C^\ast_P (X) \rightarrow D^\ast_P (X) \rightarrow \frac{D^\ast_P (X)}{C^\ast _P(X)} \rightarrow 0 . \]

By the above, when $X$ is cocompact, we can identify the $K$-theory groups $K_\ast (C^\ast_P (X))$ and $K_\ast C^\ast_r (P)$.  Thus, looking at the boundary maps in the long exact sequence of $K$-theory groups (see for example \cite{RLL,W-O}), we obtain a map
\[ \beta \colon K_{\ast+1}  \left(  \frac{D^\ast_P (X)}{C^\ast _P(X)} \right) \rightarrow K_\ast C^\ast_r (P) \]
called the {\em analytic assembly map}.

This assembly map is a generalisation of the corresponding map for groups; see for example \cite{Roe1}.

\begin{definition}
Let $X$ be a $P$-space.  Then we define the $P$-equivariant $K$-homology groups of $X$ by writing
\[ K_n^P (X) = K_{n+1}  \left(  \frac{D^\ast_P (X)}{C^\ast _P(X)} \right) . \]
\end{definition}

\begin{proposition}
The sequence of functors $K_\ast^P$ defines a locally finite $P$-homology theory.
\end{proposition}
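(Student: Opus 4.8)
The plan is to follow the proof of Proposition~\ref{hpg} essentially verbatim, replacing the homotopy-fixed-point algebras $D^\ast(X)^{hP}$ and $C^\ast_P(X)^{hP}$ by the genuinely equivariant algebras $D^\ast_P(X)$ and $C^\ast_P(X)$, and checking the three axioms of a locally finite $P$-homology theory in turn; functoriality on the category of $P$-spaces and proper equivariant maps is taken for granted, as the statement already refers to $K_\ast^P$ as a sequence of functors. The empty-space axiom is immediate: the only ample equivariant $\emptyset$-module is the zero Hilbert space, so $D^\ast_P(\emptyset) = C^\ast_P(\emptyset) = 0$, the quotient is the zero $C^\ast$-algebra, and $K_n^P(\emptyset) = K_{n+1}(0) = 0$ for all $n$. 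For the two substantive axioms it is convenient, exactly as in Proposition~\ref{hpg}, to introduce the stable unitary group $U_P(X)$ of the $C^\ast$-algebra $D^\ast_P(X)/C^\ast_P(X)$, whose homotopy groups compute $K_\ast^P(X)$, and to argue at the level of these spaces.

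For homotopy invariance, observe that the inclusions $i_0, i_1 \colon X \to X \times [0,1]$ and the projection $\pi \colon X \times [0,1] \to X$ are all proper equivariant maps with $\pi \circ i_0 = \pi \circ i_1 = \id_X$. The standard proof that coarse $K$-homology is invariant under proper homotopy — via continuously controlled operators, as in \cite{HR1,Roe1} — carries over word for word to the equivariant setting, since at each step the operators produced may be taken $P$-equivariant; hence $\pi$ induces a weak equivalence $U_P(X \times [0,1]) \to U_P(X)$ and so isomorphisms $K_n^P(X \times [0,1]) \to K_n^P(X)$. It follows that $i_{0\ast} = i_{1\ast} = \pi_\ast^{-1}$, and therefore that any two properly $P$-homotopic maps $f, g \colon X \to Y$, say via a proper equivariant homotopy $H$, induce the same map, since $f_\ast = H_\ast i_{0\ast} = H_\ast i_{1\ast} = g_\ast$.

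For the Mayer--Vietoris axiom, let $X = A \cup B$ with $A, B \subseteq X$ open and $PA \subseteq A$, $PB \subseteq B$. The decomposition being $P$-invariant, the equivariant analogue of the Mayer--Vietoris principle for the coarse $C^\ast$-algebras produces, as in the proof of Proposition~\ref{hpg}, a weak homotopy-pullback (weak fibration) square relating $U_P(A \cap B)$, $U_P(A)$, $U_P(B)$ and $U_P(X)$, and hence a long exact sequence of $P$-equivariant $K$-homology groups of the required form, with boundary map $\partial$ natural by construction. I expect the only point requiring care — and hence the main obstacle — to be checking that this argument and the preceding homotopy-invariance argument genuinely respect equivariance: concretely, that the cut-off functions, covering isometries and continuously controlled homotopies appearing in the classical proofs can all be chosen $P$-equivariantly. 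This holds because $P$ acts by $\ast$-automorphisms on an ample equivariant $X$-module and the relevant cover $\{A, B\}$ is $P$-invariant, so the classical constructions transfer with only notational changes and none of the analytic content is affected.
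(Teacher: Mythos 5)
Your strategy -- prove equivariant proper homotopy invariance and an equivariant Mayer--Vietoris sequence by redoing the classical analytic arguments with $P$-equivariant choices -- is not the route the paper takes, and the step you yourself flag as ``the only point requiring care'' is a genuine gap rather than a routine check. The paper does not equivariantize any of the analysis: it observes that $C^\ast_P(X)$ and $D^\ast_P(X)$ are fixed-point algebras for the $P$-action on $C^\ast(X)$ and $D^\ast(X)$, so that $K^P_{n-1}(X)$ is the homotopy group of the genuine fixed point set $U(X)^P$ of the stable unitary group $U(X)$ of $D^\ast(X)/C^\ast(X)$, and then repeats the formal argument of Proposition \ref{hpg}: the non-equivariant proper homotopy invariance and the Mayer--Vietoris fibration sequence for $K$-homology are quoted from \cite{HR1,Roe1} and transported through the fixed-point construction, in the same way that Proposition \ref{hpg} transported them through $(-)^{hP}$ using Corollary \ref{maco}. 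At no point does the paper claim that cut-offs, covering isometries or controlled homotopies can be chosen equivariantly.

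That claim is exactly where your proof would have to do its work, and the justification you give for it is inadequate and partly incorrect. A semigroup does not act by $\ast$-automorphisms: an element $p$ acts on the algebras only by a $\ast$-endomorphism (the paper itself says ``by $\ast$-homomorphisms'') and on $l^2$-type modules by a non-surjective isometry; there are no inverses and no averaging over $P$. These are precisely the devices that, in the group case, let one say ``the classical proof goes through equivariantly'' (translate choices made on a fundamental domain or quotient, or average). For a general $P$-space there is no reason that an equivariant covering isometry for the inclusion $A\cap B\hookrightarrow A$, or the isometries and continuously controlled homotopies in the homotopy-invariance argument, can be chosen to commute with the $P$-action; the fact that the multiplication operators $\rho(f)$ are equivariant (built into the definition of an equivariant module) does not produce them. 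So the sentence ``the classical constructions transfer with only notational changes'' is an assertion of essentially the whole proposition, not an argument for it. Your handling of the empty-space axiom, and the formal deduction $f_\ast = H_\ast i_{0\ast} = H_\ast i_{1\ast} = g_\ast$ once invariance for the cylinder is known, are fine; to close the gap you should either supply the equivariant constructions in detail or follow the paper: identify $K^P_\ast(X)$ with the homotopy of $U(X)^P$ and rerun the argument of Proposition \ref{hpg}, which needs only the non-equivariant input together with the behaviour of the fixed-point construction.
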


\begin{proof}
In general, for a $P$-space $Y$, let $Y^P = \{ y \in Y \ |\ py=y \textrm{ for all } p\in P \}$.  

Let $U(X)$ be the stable unitary group of the $C^\ast$-algebra $D^\ast (X)/C^\ast (X)$.  Then, by definition of the $C^\ast$-algebras $C^\ast_P (X)$ and $D^\ast_P (X)$ as fixed point $C^\ast$-algebras under a $P$-action by $\ast$-homomorphisms, the groups $K_{n-1}(X)$ and $K_{n-1}^{P}(X)$ are, respectively, the homotopy groups of the stable unitary groups $U(X)$ and $U(X)^{P}$ respectively.

The proof is now essentially the same as that of proposition \ref{hpg}.
\end{proof}

\begin{definition}
We say a torsion-free semigroup $P$ satisfies the {\em analytic Novikov conjecture} if the classifying space $EP$ is cocompact, and the map
\[ \beta \colon K_n^P (EP)\rightarrow K_n C^\ast_r (P) \]
is injective.
\end{definition}

We restrict our attention to torsion-free semigroups, since, in the case of groups, the map $\beta$ is not in general injective for groups with torsion, though it {\em is} conjectured to be rationally injective.  However, all of our arguments here are for torsion-free semigroups.

\section{Descent}

The {\em descent argument}, outlined in this section, tells us that the coarse Baum-Connes conjecture, along with certain mild extra conditions, implies the analytic Novikov conjecture. 

\begin{lemma}
We have a natural transformation $\theta_\ast \colon K_n^P (X) \rightarrow K_n^{hP}(X)$ that is an isomorphism whenever $X$ is a finite free $P$-$CW$-complex.
\end{lemma}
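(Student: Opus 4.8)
The plan is to construct $\theta_\ast$ from a map of $C^\ast$-algebra extensions, check that it assembles into a natural transformation of locally finite $P$-homology theories, and then invoke Theorem \ref{freeiso} to reduce the isomorphism statement to the single case $X = P$.

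For the construction: for a $C^\ast$-algebra $A$ carrying a $P$-action by $\ast$-homomorphisms there is a natural $\ast$-homomorphism from the fixed-point algebra $A^P$ to the homotopy fixed-point algebra $A^{hP} = \Map_P(EP,A)$, sending an element to the corresponding constant equivariant function on $EP$. Applied to the $P$-equivariant ideal inclusion $C^\ast(X) \triangleleft D^\ast(X)$ this gives a commuting square of short exact sequences, hence an induced map on the quotient $C^\ast$-algebras and so on $K$-theory. In the stable-unitary-group description used in the proof of Proposition \ref{hpg} and in its analogue for $K^P_\ast$, this map is precisely the one induced on homotopy groups by the inclusion $U(X)^P \hookrightarrow U(X)^{hP}$ of genuine into homotopy fixed points of $U(X)$, the stable unitary group of $D^\ast(X)/C^\ast(X)$; this is $\theta_\ast$. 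Naturality in $X$ is immediate. For compatibility with Mayer--Vietoris sequences, recall that in both theories the connecting maps arise (as in the proof of Proposition \ref{hpg}) from the weak fibration sequences $U(A\cap B)^P \to U(A)^P \vee U(B)^P \to U(X)^P$, and the corresponding sequences of homotopy fixed points, attached to an equivariant open cover $X = A \cup B$; the inclusion of genuine into homotopy fixed points is natural and, using Corollary \ref{maco}, carries one such fibration sequence to the other, so $\theta_\ast$ respects the resulting long exact sequences. Thus $\theta_\ast$ is a natural transformation of locally finite $P$-homology theories, and by Theorem \ref{freeiso} it suffices to prove that $\theta_\ast \colon K^P_m(P) \to K^{hP}_m(P)$ is an isomorphism for every $m$, where $P$ is the free homogeneous $P$-space on which $P$ acts by left multiplication.

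For the case $X = P$ I would compute both sides. Both are homotopy groups of the fixed, respectively homotopy fixed, points of $U(P)$, and $\pi_\ast U(P)$ is the locally finite $K$-homology of the discrete space $P$, namely $\prod_{p \in P} K_\ast(\mathrm{pt})$ with the $P$-action induced by left multiplication; the algebra $D^\ast(P)/C^\ast(P)$ itself can be described explicitly from the module $l^2(P) \otimes l^2(\N)$ of the proof of the Morita-equivalence theorem, taking $K$ a one-point set, so that $C^\ast_P(P) \cong C^\ast_r(P) \otimes \mathcal{K}$ and $D^\ast(P)/C^\ast(P)$ becomes a bounded-sections algebra over $P$. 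Filtering a free $P$-$CW$-model of $EP$, such as the infinite join of Example \ref{ij}, by its skeleta produces a spectral sequence converging to $\pi_\ast \Map_P(EP, U(P)) = K^{hP}_\ast(P)$ whose $E_2$-page is the cohomology of $P$ with coefficients in $\prod_{p} K_\ast(\mathrm{pt})$. A Shapiro-type argument, exploiting that this coefficient module is built from copies of $K_\ast(\mathrm{pt})$ permuted by the left-multiplication maps of $P$, collapses the spectral sequence onto the zero column; there the abutment is identified with $\pi_\ast U(P)^P = K^P_\ast(P)$, and one checks that this identification is realised by $\theta_\ast$, which is the edge homomorphism.

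The main obstacle is this last step, and within it two features of the semigroup setting demand genuine care. First, the explicit description of the equivariant Roe algebras at the homogeneous space $P$ is subtler than for groups because a semigroup's translation operators $v_p$ are isometries but not unitaries; consequently the equivariance and pseudolocality conditions interact differently, and the non-unitarity of the $v_p$ must be tracked through the identification of $D^\ast_P(P)$, $C^\ast_P(P)$ and their quotient. Second, on the homotopy fixed-point side one must set up the descent spectral sequence for a monoid over $EP/P$, establish the requisite acyclicity of the coefficient module $\prod_{p} K_\ast(\mathrm{pt})$ as a $\mathbb{Z}[P]$-module, verify that forming $\Map_P(EP,-)$ is compatible with the defining extension (so that the $hP$-theory is indeed computed by this unitary-group spectral sequence), and confirm that $\theta_\ast$ coincides with the resulting edge homomorphism. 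All of these are routine for groups, and this is where the new work lies.
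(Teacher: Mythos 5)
Your construction of $\theta_\ast$ is essentially the paper's: the composite $k'\circ j$ built there unwinds to exactly your constant-function map $U(X)^P\to U(X)^{hP}$, naturality and compatibility with the Mayer--Vietoris fibration sequences are checked just as you indicate (via Corollary \ref{maco}, as in Proposition \ref{hpg}), and the reduction to the single case $X=P$ via Theorem \ref{freeiso} is also the paper's route. The divergence --- and the gap --- is in the base case $X=P$, which is the only step with real content and which you do not actually prove: you propose a descent spectral sequence for $\pi_\ast\Map_P(EP,U(P))$ with $E_2$-page the monoid cohomology of $P$ with coefficients in $\prod_{p}K_\ast(\mathrm{pt})$, a Shapiro-type collapse, and an identification of $\theta_\ast$ with an edge homomorphism, and you explicitly defer every verification (``this is where the new work lies''). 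As written this is a plan, not a proof, and the plan carries burdens the paper never incurs: for an infinite-dimensional model of $EP$ (such as the infinite join of Example \ref{ij}) the skeletal spectral sequence raises convergence and $\lim^1$ questions, monoid cohomology and the edge-map identification have to be set up from scratch, and one must justify that the filtration computes $\pi_\ast\Map_P(EP,U(P))$ at all.

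The paper's base case avoids all of this by a direct coinduction argument: for $X=P$ the map $k\colon \Map(P,U_P(P))\rightarrow U(P)$, $k(f)=i_\ast f(e)$, is a homeomorphism, so $U(P)$ is identified equivariantly with a coinduced space $\Map(P,Y)$; then $\Map_P(EP,\Map(P,Y))\cong\Map(EP,Y)\simeq Y$ because $EP$ is weakly contractible, and Corollary \ref{maco} shows $k'$ is a weak equivalence, whence $\theta=k'\circ j$ is a weak equivalence and $\theta_\ast$ is an isomorphism for $X=P$. (This identification of $U(P)$ as coinduced is also where your concern about the isometries $v_p$ not being unitary is absorbed, without any explicit description of $D^\ast_P(P)/C^\ast_P(P)$ beyond what the paper already uses.) To complete your write-up you should either carry out the spectral-sequence programme in full, including the semigroup-specific points you list, or replace it by this short coinduction step, which is exactly what Corollary \ref{maco} is designed to make work.
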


\begin{proof}
Let $U(X)$ and $U_P(X)$ be the stable unitary groups of the $C^\ast$-algebras $D^\ast (X)/C^\ast (X)$ and $D^ast_P (X)/C^\ast_P (X)$ respectively.  Since the space $EP$ is weakly contractible, we have a natural weak equivalence
\[ \begin{array}{rcl} 
j\colon U_P (X) & \simeq & \Map (EP,U_P(X)) \\
& = & \Map (EP,\Map_P(P,U_P(X))) \\
& = & \Map_P (EP,\Map (P,U_P(X))) \\
& = & \Map (P,E_P(X))^{hG} 
\end{array}. \]

Let $i\colon D^\ast_P (X)/C^\ast_P (X)\rightarrow D^\ast (X)/C^\ast (X)$ be defined by the inclusions $C^\ast_P (X)\hookrightarrow C^\ast (X)$, $D^\ast_P (X)\hookrightarrow D^\ast (X)$.  Then we have a natural map $k\colon \Map (P, U_P(X)) \rightarrow U(X)$ defined by writing $k(f) = i_\ast f(e)$, where $e$ is the identity element of the semigroup $P$.

Taking homotopy fixed point sets, we obtain a natural map $k' \colon \Map (P,U_P (X))^{hP} \rightarrow U(X)^{hP}$.  Composing with the map $j$, we have a natural map
\[ \theta =k'\circ j \colon U_P (X) \rightarrow U_P (X)^{hP} \]
and so a natural induced map  $\theta_\ast \colon K_n^P (X) \rightarrow K_n^{hP}(X)$.

Let $c\colon P\rightarrow +$ be the constant map onto the one point space.  Then the composition $c_\ast \circ i_\ast = i_\ast \circ c_\ast \colon U_P (P)\rightarrow U(+)$ is certainly a homotopy-equivalence, and the map $k \colon \Map (P,U_P(+))\rightarrow U(P)$ is a homeomorphism, and so a weak equivalence.  By corollary \ref{maco}, the map $k'$ is also a weak equivalence.

Thus the map $\alpha$ is a weak equivalence in this case, making the induced map $\alpha_\ast \colon K_n^P (P)\rightarrow K_n^{hP}(P)$ an isomorphism.

By theorem \ref{freeiso}, the map $\theta_\ast \colon K_n^P (X) \rightarrow K_n^{hP}(X)$ is therefore an isomorphism whenever $X$ is a finite free $P$-$CW$-complex.
\end{proof}

Now, let $X$ be a cocompact coarse $P$-space.  

Now, we can define a map $\eta_\ast \colon K_n (D^\ast_P(X))\rightarrow K_n (D^\ast (X)^{hP})$ in much the same way as the map $\theta_\ast$ in the above lemma, and so, whenever $X$ is a cocompact coarse $P$-space, we have a commutative diagram
\[ \begin{array}{ccccc}
K_n (D^\ast_P (X)) & \stackrel{v_G}{\rightarrow} & K_n^P (X) & \stackrel{\beta}{\rightarrow} & K_n C^\ast_r (P) \\
\downarrow & & \downarrow & & \\
K_n (D^\ast (X)^{hP}) & \stackrel{v_{hG}}{\rightarrow} & K_n^{hP} (X)  \\
\end{array} \]
where $\beta$ is the analytic assembly map.

\begin{theorem}
Let $X$ be a free coarse $P$-space that is a free finite $P$-$CW$-complex as a topological space.  Suppose the coarse Baum-Connes conjecture holds for $X$.  Then the analytic assembly map $\beta$ is injective for $X$.
\end{theorem}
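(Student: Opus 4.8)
The plan is to run the coarse-geometric descent argument through the commutative square displayed just above the statement, feeding in the isomorphism $\theta_\ast$ from the preceding lemma together with a vanishing statement coming from the coarse Baum--Connes conjecture. The first step is to reformulate the hypothesis. Under the identification $K_n(D^\ast(X)/C^\ast(X))\cong K_{n-1}(X)$, the coarse assembly map $\alpha\colon K_\ast(X)\to K_\ast C^\ast(X)$ is the boundary map in the $K$-theory long exact sequence of $0\to C^\ast(X)\to D^\ast(X)\to D^\ast(X)/C^\ast(X)\to 0$. Inspecting that long exact sequence, $\alpha$ is an isomorphism in every degree if and only if $K_\ast(D^\ast(X))=0$; so the assumption that the coarse Baum--Connes conjecture holds for $X$ is exactly the statement $K_\ast(D^\ast(X))=0$.

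Next I would transport this vanishing to the homotopy fixed point $C^\ast$-algebra. The condition $K_\ast(D^\ast(X))=0$ says that the stable unitary group $U(D^\ast(X))$ is weakly contractible. Since $EP$ is a free $P$-space, Proposition \ref{weco} shows that $\Map_P(EP,U(D^\ast(X)))$ is weakly contractible as well; as this space is weakly equivalent to the stable unitary group of $D^\ast(X)^{hP}$, we conclude $K_\ast(D^\ast(X)^{hP})=0$.

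Finally comes the diagram chase. Because $X$ is a free finite $P$-$CW$-complex, the preceding lemma gives that $\theta_\ast\colon K_n^P(X)\to K_n^{hP}(X)$ is an isomorphism, hence injective. Suppose $\xi\in K_n^P(X)$ satisfies $\beta(\xi)=0$. By exactness of the $K$-theory long exact sequence of $0\to C^\ast_P(X)\to D^\ast_P(X)\to D^\ast_P(X)/C^\ast_P(X)\to 0$ at the term $K_n^P(X)$ (using the Morita identification $K_\ast(C^\ast_P(X))\cong K_\ast C^\ast_r(P)$ to make sense of the target of $\beta$), we can write $\xi=v_G(\zeta)$ for some $\zeta\in K_n(D^\ast_P(X))$. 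Commutativity of the square gives $\theta_\ast(\xi)=v_{hG}(\eta_\ast(\zeta))$, and $\eta_\ast(\zeta)$ lies in $K_n(D^\ast(X)^{hP})=0$, so $\theta_\ast(\xi)=0$; since $\theta_\ast$ is injective, $\xi=0$. Thus $\beta$ is injective for $X$.

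The step I expect to be the main obstacle is the first one, together with its compatibility with the square: one must be careful that "the coarse Baum--Connes conjecture holds for $X$" is being used precisely in the form $K_\ast(D^\ast(X))=0$, and that the non-equivariant $X$-module defining $D^\ast(X)$ can be chosen compatibly with the equivariant $X$-module defining $D^\ast_P(X)$, so that the map $\eta_\ast$ is well defined and the displayed square genuinely commutes. Once that bookkeeping is settled, everything else is the elementary chase above combined with Proposition \ref{weco} and the preceding lemma.
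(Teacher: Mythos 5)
Your proposal is correct and follows essentially the same route as the paper: use the coarse Baum--Connes hypothesis in the form $K_\ast(D^\ast(X))=0$, push this through Proposition \ref{weco} to get $K_\ast(D^\ast(X)^{hP})=0$, and then combine the commutative square with the isomorphism $\theta_\ast$ from the preceding lemma and exactness of the equivariant sequence to conclude that the kernel of $\beta$ (the image of $v_G$) vanishes. Your explicit diagram chase and the bookkeeping caveat about compatible choices of modules are just a more detailed rendering of the paper's terse argument.
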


\begin{proof}
The coarse Baum-Connes conjecture for $X$ implies that $K_n D^\ast (X) =0$ for all $n$.  Hence, by proposition \ref{weco}, $K_n (D^\ast (X)^{hP})=0$ for all $n$.

Now,  by the previous lemma, the map $\theta_\ast \colon K_n^P(X)\rightarrow K_n^{hp}(X)$ in the above commutative diagram is an isomorphism.

It follows that the map $v_p$ is zero, so the map $\beta$ is injective as required.
\end{proof}

\begin{corollary} \label{ANCD}
Let $P$ be a semigroup with a classifying space $EP$ that is a a coarse $P$-space and a finite $P$-$CW$-complex.  Suppose the coarse Baum-Connes conjecture holds for the space $EP$.

Then the analytic Novikov conjecture holds for the semigroup $P$.
\noproof
\end{corollary}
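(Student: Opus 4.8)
The plan is to obtain this as an immediate consequence of the preceding theorem, applied to the space $X = EP$. The first step is to verify that $EP$ meets that theorem's hypotheses. By definition $EP$ is a free $P$-space, and by assumption it also carries a compatible coarse structure (so it is a free coarse $P$-space) and is a finite $P$-$CW$-complex. Since $EP$ is free, the Proposition characterising freeness of finite $P$-$CW$-complexes shows that $EP$ admits a $CW$-decomposition in which every cell has the form $P \times D^k$; that is, $EP$ is a \emph{free} finite $P$-$CW$-complex in the technical sense required. The remaining hypothesis, that the coarse Baum-Connes conjecture holds for $EP$, is assumed outright.

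Having checked the hypotheses, the second step is simply to quote the preceding theorem with $X = EP$: it yields that the analytic assembly map $\beta \colon K_n^P(EP)\rightarrow K_n C^\ast_r(P)$ is injective for all $n$.

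The final step is to match this against the definition of the analytic Novikov conjecture. Since any finite $P$-$CW$-complex is cocompact, the classifying space $EP$ is cocompact; combined with the injectivity of $\beta$ just established (and with $P$ torsion-free, which is the standing hypothesis under which the conjecture is even formulated), this is exactly the assertion that $P$ satisfies the analytic Novikov conjecture.

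There is essentially no obstacle beyond bookkeeping: the only point that needs a moment's attention is confirming that the abstractly defined $EP$, together with the given coarse and $CW$-structures, genuinely satisfies the ``free coarse $P$-space / free finite $P$-$CW$-complex'' hypotheses of the theorem, so that the chain of identifications in the preceding lemma (ultimately resting on Theorem \ref{freeiso} and Proposition \ref{weco}) is available. Once that is in place the corollary follows at once, which is why it can reasonably be stated without a separate proof.
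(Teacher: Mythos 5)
Your proposal is correct and is exactly the argument the paper intends: the corollary is stated without proof precisely because it is the preceding theorem applied to $X = EP$, with the freeness of $EP$ (via the proposition characterising free finite $P$-$CW$-complexes) and its cocompactness (as a finite $P$-$CW$-complex) supplying the remaining hypotheses of the theorem and of the definition of the analytic Novikov conjecture. Your extra care in checking that the abstract $EP$ genuinely satisfies the ``free finite $P$-$CW$-complex'' hypothesis is a reasonable bit of bookkeeping the paper leaves implicit.
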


Note that if $G$ is a group with torsion, then the classifying space $BG$ is never a finite $CW$-complex, and so the universal cover $EG$ is never a finite $G$-$CW$-complex.   Thus, at least in the group case, the above result is not relevant when torsion is present.

\section{Examples}

We conclude this article by looking at some simple examples where the main result of the previous section applies.  As well as corollary \ref{ANCD}, we use the result from \cite{Yu2} that the coarse Baum-Connes conjecture holds for any bounded geometry coarse space which can be uniformly embedded in Hilbert space.

\subsection*{The group $\N$}

The group $\N$ acts freely on $\R^+$ by the writing $(n,x)\mapsto n+x$, where $n\in \N$, and $x\in \R^+$.  With the coarse structure defined by the metric, the space $\R^+$ is certainly uniformly embeddable in Hilbert space, so the coarse Baum-Connes conjecture holds.

Now, $\R^+$ is a finite free $\N$-$CW$-complex, with a single $0$-cell, $\N$, and $1$-cell $\N \times [0,1]$, with attaching map $f\colon \N \times \{ 0,1 \} \rightarrow \N$ defined by the formula $f(n,k)=n+k$.

Now $\R_+$ is weakly contractible, so we can take $E\N = \R_+$, and, by corollary \ref{ANCD}, the analytic Novikov conjecture holds for $\N$.

Similarly, let $\N^\times$ be the group of non-zero natural numbers with group operation defined by multiplication.  Then $E\N = [1,\infty )$, with free $\N$-action defined by writing $(n,x)\mapsto nx$.  As aboce, the analytic Novikov conjecture holds for $\N^\times$.

\subsection*{Free semigroups}

The {\em free semigroup on $n$ generators}, $V_n$, is the set of words in an alphebet with $n$ letters, say $e_1,\ldots ,e_n$.  Let $S= \{ e_1, \ldots ,e_n \}$.  Then $V_n$ certainly acts freely on the Cayley graph $\Cay (V_n; S)$, which is weakly contractible.

So we can take $EV_n = \Cay (V_n;S)$.  The space $EV_n$ is a finite $V_n$-$CW$-complex, with a single $0$-cell, the set of vertices, and $1$-cells $P\times \times \{ e_1 \} \times [0,1] , \ldots , P\times \{e_n \} \times [0,1]$.  The attaching map $f\colon P\times \{ e_i \} \times \{ 0,1\} \rightarrow P$ is defined by writing $f(p,e_i,0) =p$ and $f(p,e_i,1) = pe_i$.

The space $EV_n$ is certainly uniformly embeddable in an infinite-dimensional Hilbert space, so by corollary \ref{ANCD}, the analytic Novikov conjecture holds for the free semigroup $V_n$.

\subsection*{Products}

Let $P$ and $Q$ be semigroups such that $EP$ and $EQ$ are finite free $P$- and $Q$-$CW$-complexes respectively, and $EP$ and $EQ$ have compatible coarse structures where $P$ and $Q$ act respectively by coarse continuous maps.  Suppose $EP$ and $EQ$ are uniformly embeddable in Hilbert spaces $H_P$ and $H_Q$ respectively.

Then we can take $E(P\times Q)= EP\times EQ$.  The space $EP\times EQ$ is a free finite $P\times Q$-$CW$-complex, which is a coarse $P\times Q$-space uniformly embeddable in $H_P\times H_Q$.  Thus the analytic Novikov conjecture holds for $P\times Q$.

In particular, by the above, the analytic Novikov conjecture holds for the semigroup $\N^m \times (\N^\times )^n$ for all $m$ and $n$.

\subsection*{Subgroups}

Let $P$ be a semigroup with a classifying space $EP$ that is a a coarse $P$-space uniformly embeddable in Hilbert space, and a finite free $P$-$CW$-complex.  Let $P'$ be a subgroup of $P$.

Consider a $CW$-decomposition of $EP$.  Then by freeness, each cell takes the form $P\times D^n$.  We can therefore form a classifying space $EP'$ by replacing $P\times D^n$ by $P'\times D^n$.  Then $EP'$ is a finite free-$P'$-$CW$-complex, with a coarse structure such that it is a subspace of $EP$, which uniformly embeds in Hilbert space.

Thus the analytic Novikov conjecture holds for the subgroup $P'$.

\subsection*{The $ax+b$ semigroup over $\N$}

The {\em $ax+b$ semigroup over $\N$} is defined in \cite{Cu7} as the set
\[ \{ P_{\N } = \left\{ \left( \begin{array}{cc}
1 & k \\
0 & n \\ 
\end{array} \right) \ |\ n\in \N^\times ,\ k\in \N \right\} \]
with group operation defined by matrix multiplication.

The group $P_\N$ acts freely and cocompactly on the space $[0,\infty ) \times [1,\infty )$ by the formula
\[ \left( \begin{array}{cc}
1 & k \\
0 & n\\
\end{array} \right) \left( \begin{array}{c}
x \\
y \\
\end{array} \right) = \left( \begin{array}{c}
x+ky \\
ny \\
\end{array} \right) . \]

As above, the space $[0,\infty ) \times [1,\infty )$ has the structure of a finite $P_\N$-$CW$-complex.  As a coarse space, it is uniformly embeddable in Hilbert space.  So the analytic Novikov conjecture holds for $P_\N$.

\subsection*{Linear semigroups}

It is shown in \cite{Ji} that the group $GL (n, \Z )$ has finite asymptotic dimension with the word length metric, and $BGL (n,\Z )$ is a finite $CW$-complex.

Hence the universal cover, $EGL (n,\Z )$ is a finite $GL(n,\Z )$-$CW$-complex, and is coarsely equiivalent to $GL (n,\Z )$, so has finite asymptotic diminsion, and therefore uniformly embeds in Hilbert space.

Thus the analytic Novikov conjecture holds for $GL (n,\Z )$, and for any subsemigroup by the above.  In particular, it holds for $GL (n, \N )$.

\end{document}